\newtheorem{theorem}{Theorem}[section]
\newtheorem{lemma}[theorem]{Lemma}
\newtheorem{define}[theorem]{Definition}
\newtheorem{cor}[theorem]{Corollary}
\newtheorem{prop}[theorem]{Proposition}
\newtheorem{example}[theorem]{Example}
\newcommand{\G}{\mathcal G}
\newcommand{\N}{\mathcal N}
\newcommand{\n}{\mathfrak n}
\newcommand{\D}{\mathfrak D}
\newcommand{\T}{\mathcal T}
\newcommand{\F}{\mathbb F}
\newcommand{\R}{\mathcal R}
\newcommand{\ppp}{\mathfrak p}
\newcommand{\pa}{\ppp^{\alpha}}
\newcommand{\pb}{\ppp^{\beta}}
\newcommand{\cyc}{\mathrm{Cyc}}
\newcommand{\m}{\mathfrak m}
\newcommand{\Z}{\mathbb Z}
\newcommand{\ord}[2]{\mathrm{ord}(#1,#2)}
\newcommand{\ou}{\mathcal{O}}
\newcommand{\rad}{\mathrm{rad}}
\newcommand{\rtp}{\widehat{\otimes}}
\newcommand{\amap}[1]{\Gamma_{a,#1}}
\newcommand{\ideal}[1]{\mathfrak{#1}}
\newcommand{\an}[1]{\langle #1\rangle}
\newcommand{\doublespace}
\begin{document}

\begin{frontmatter}

\title{Dynamics of the $a$-map over residually finite Dedekind Domains and applications}

\author[UNICAMP]{Claudio Qureshi}
\ead{cqureshi@gmail.com}
\author[USP]{Lucas Reis}
\ead{lucasreismat@gmail.com}
\address[UNICAMP]{Universidade Estadual de Campinas, Instituto de Matem{\'a}tica, Estat{\'\i}stica e Computa\c{c}{\~a}o Cient{\'\i}fica, Campinas, SP 13083-859, Brazil.}
\address[USP]{Universidade de S\~{a}o Paulo, Instituto de Ci\^{e}ncias Matem\'{a}ticas e de Computa\c{c}\~{a}o, S\~{a}o
Carlos, SP 13560-970, Brazil.}

\journal{Elsevier}
\begin{abstract}
Let $\D$ be a residually finite Dedekind domain, $a\in \D$ be a nonzero element and $\n$ be a nonzero ideal of $\D$. In this paper we describe the dynamics of the map $x\mapsto ax$ over the quotient ring $\D/\n$. We further present some applications of our main result.
\end{abstract}
\begin{keyword}
finite dynamical systems, arithmetic dynamics, dedekind domain, residually finite ring, finite fields
\MSC[2010]{37P99\sep 13F05\sep 12E20}
\end{keyword}

\end{frontmatter}

\section{Introduction}


Finite dynamical systems associated with special types of functions have been extensively studied in the literature. For instance, iterations of quadratic polynomials over finite fields (motivated in part by some cryptographic applications such as the Pollard-rho factorization algorithm) were studied in \cite{PMMY01,Rogers96,VS04}. Dynamic of Chebyshev polynomials of prime degree and its relation with decomposition of primes in certain towers of number fields were studied by A. Gassert in \cite{Gassert14} and \cite{Gassert14b}. Dynamic of Chebyshev polynomials of arbitrary degree was studied in \cite{QP18}. Dynamic of special types of linearized polynomials over finite fields was described in \cite{PR18}. Dynamic of rational maps over finite fields such as R{\'e}dei functions \cite{QP15} and maps of the form $x \mapsto k(x+x^{-1})$ \cite{Ugolini13, Ugolini14} have also been considered. The dynamic of certain maps associated with endomorphism of ordinary elliptic curves over finite fields was dealt in \citep{Ugolini18}. A survey on iteration of functions over finite fields is given in \cite{MPQ19}. \\

In this paper, we consider a residually finite Dedekind Domain, which we denote by $\D$, a nonzero element $a\in \D$, a nonzero ideal $\n \unlhd \D$ and study the dynamic of the $a$-map $\amap{\n}$ given by: 
\begin{equation}\label{def:GammaMap}
\begin{array}{rccl}
  \amap{\n} : & \D/\n &\longrightarrow&\D/\n \\
  &x &\longmapsto& \Psi_{\n}(a)\cdot x.
\end{array}
\end{equation}
where $\Psi_{\n}:\D\to \D/\n$ is the canonical epimorphism (i.e. $\Psi_{\n}(x)=x+\n$). We bring a unified frame to the study of several dynamical systems, some of them mentioned above, via the dynamic of the maps $\Gamma_{a,\n}$. In general, the dynamic of a map $f$ over a finite set $X$ can be described through its associated functional graph $\mathcal{G}(f/X)$ whose vertices are the elements of $X$ and (directed) edges of the form $(x,f(x))$ for $x\in X$. In dynamical systems, two maps $f:X \rightarrow X$ and $g: Y \rightarrow Y$ are called {\it conjugates} when there is a bijection $h:X\rightarrow Y$ such that $h \circ f = g \circ h$. In this case, $h$ establishes an isomorphism between the functional graphs $\mathcal{G}(f/X)$ and $\mathcal{G}(g/Y)$. Our main result is a complete description of the functional graph $\mathcal{G}(\amap{\n})$ up to isomorphism (Theorem \ref{th:main}). This result naturally extends the structural theorem for the functional graphs associated with the $a$-map over cyclic groups and with R{\'e}dei functions over finite fields given in \citep{QP15}, see also \cite[Proposition 2.1]{QPM17}. Other corollary of our main result is a complete description of the functional graphs associated with linearized polynomials over finite fields, extending results given in \cite{PR18}. These polynomials have many interesting properties and appear in diverse areas such as network coding theory \cite{WAS13} and finite projective geometries; see for example Chapter 3.4 of \cite{LN97} and the notes at the end of this chapter for more properties and applications of these polynomials. The hanging trees attached to some periodic points of Chebyshev polynomials and maps induced by endomorphism of elliptic curves considered in \citep{Ugolini18}, both over finite fields, can also be explained from our main result.\\

This paper is organized as follows. In Section \ref{sect:preliminaries} we cover some preliminaries results and fix some notation to be used throughout this paper. In Section \ref{sect:main} we prove our main result (Theorem \ref{th:main}) and provide a concrete example. In Section \ref{sect:applications} we show several specializations of our main result; in particular we completely describe the dynamic of linearized polynomials over finite fields.

\section{Preliminaries}\label{sect:preliminaries}
In this section, we provide background material that is used along the way and some preliminary results heading to the proof of our main result.

\subsection{On residually finite Dedekind Domains}\label{subsect:OnRFDD}

Let $\D$ be a residually finite Dedekind Domain (i.e. $\D$ is an integral domain in which every nonzero proper ideal $\n$ factors into a product of prime ideals and the residue class ring $\D/\n$ is finite). For ideals $\n$ and $\m$ of $\D$, we denote $\m | \n$ if there is an ideal $\m' \unlhd \D$ such that $\n= \m \m'$. In a Dedekind domain, we have that $\m | \n$ if and only if $\n \subseteq \m$ and consequently $\gcd(\n,\m)=\n+\m$. When $\gcd(\n,\m)$ is principal, say $\gcd(\n,\m)=f \D$, we abuse of notation and write $\gcd(\n,\m)=f$. For example, $\gcd(\n,\m)=1$ means $\n+\m=\D$ and we say that $\n$ and $\m$ are relatively prime ideals. In this case we have $\n \cap \m = \n\m$. The radical of an ideal $\n$ is defined as $\mathrm{rad}(\n)=\{ d \in \D: d^i \in \n \mbox{ for some }i\in \Z^{+}  \}$. In a Dedekind domain $\rad(\n)$ is the product of the distinct prime ideals factors of $\n$. If $a \in \D$ is a nonzero element and $\n \unlhd \D$ is a nonzero ideal we have a unique decomposition $\n = \n_0 \n_1 $ where $\an{a}\subseteq \mathrm{rad}(\ideal{\n_0})$ and $\gcd(\n_1, \an{a}) = 1$; we refer to this decomposition as the $a$-decomposition of the ideal $\n$. \\

The norm, Euler Phi function and multiplicative order for Dedekind domains are defined as follows.

%

\begin{define}
Let $\n$ be any nonzero ideal of $\D$. 
\begin{enumerate}[(i)]
\item The \emph{norm} $\N_{\D}(\n)$ of $\n$ is the cardinality of the residual class ring $\D/\n$.
\item If $\n$ is a proper ideal of $\D$, the Euler Phi function $\varphi_{\D}(\n)$ of $\n$ is the cardinality of the group of units $U(\D/\n)$ of $\D/\n$. If $\n=\D$, $\varphi_{\D}(\n):=1$. 
\item For any element $a\in \D$ such that $\an{a}+\n=\D$, let $\ord{a}{\n}$ be the least positive integer $i$ such that $a^i-1\in \n$, or equivalently, $a^i\equiv 1\pmod {\n}$.
\end{enumerate}
\end{define}

It is well known that the norm $\N_{\D}$ is completely multiplicative, the Euler Phi function $\varphi_{\D}$ is multiplicative and, for any nonzero ideal $\n$ of $\D$:
$$\varphi_{\D}(\n)=\N_{\D}(\n)\cdot \prod_{\ppp|\n}\left(1-\frac{1}{\N_{\D}(\ppp)}\right),$$
where the product above is over all the distinct prime ideals dividing $\n$; see for example \cite[Chapter 1]{Narkiewicz04}. In particular, if $\ppp$ is any nonzero prime ideal of $\D$ and $i$ is a positive integer, we have that
\begin{equation}\label{eq:norm-phi}\varphi(\ppp^i)=\N_{\D}(\ppp^i)-\N_{\D}(\ppp^{i-1}).\end{equation}

We observe that, from definition, $\an{a}+\n=\D$ if and only if $\Psi_{\n}(a)\in U(\D/\n)$ and, in this case, $\ord{a}{\n}$ is the multiplicative order of $\Psi_{\n}(a)$. Then, by Lagrange theorem, $\ord{a}{\n} \mid \varphi_{\D}(\n)$. The following result provides information on the existence and number of solutions of linear congruences in Dedekind domains.



\begin{lemma}[{\cite[Theorem 2.3]{M14}}]\label{lem:linearcong}
Let $\D$ be a residually finite Dedekind Domain and $\n$ an ideal of $\D$. For $a, b\in \D$, the linear congruence 
$$ax\equiv b\pmod{\n},$$
is solvable if and only if $b\in \an{a}+\n$. Furthermore, if the congruence is solvable, then it has exactly $N(\an{a}+\n)$ incongruent solutions modulo $\n$.
\end{lemma}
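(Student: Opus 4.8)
The plan is to separate the two assertions: the solvability criterion and the solution count. For solvability, I would simply unwind the definition of congruence: $ax\equiv b\pmod{\n}$ holds for some $x\in\D$ precisely when $b-ax\in\n$ for some $x$, i.e.\ when $b\in a\D+\n=\an{a}+\n$. This needs nothing beyond the definition of the ideal sum, so the equivalence is immediate in both directions.

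For the counting statement, the key idea is to reduce to the homogeneous congruence and then apply a cardinality argument on the finite ring $\D/\n$. Assuming the congruence is solvable, fix one solution $x_0$. Then $x$ solves $ax\equiv b\pmod{\n}$ if and only if $a(x-x_0)\equiv 0\pmod{\n}$, so the set of solutions modulo $\n$ is the coset $x_0+K$, where $K=\{\,y+\n : ay\equiv 0\pmod{\n}\,\}$. Hence the number of incongruent solutions equals $|K|$, and it suffices to prove $|K|=\N_{\D}(\an{a}+\n)$.

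I would then identify $K$ as the kernel of the additive (indeed $\D$-module) endomorphism $\amap{\n}$ of $\D/\n$, since $\amap{\n}(y+\n)=\Psi_{\n}(a)\cdot(y+\n)=ay+\n$ vanishes exactly when $ay\in\n$. Its image is $(a\D+\n)/\n=(\an{a}+\n)/\n$, and the third isomorphism theorem gives $(\D/\n)\big/\big((\an{a}+\n)/\n\big)\cong \D/(\an{a}+\n)$. Because $\D$ is residually finite, every quotient here is finite; taking cardinalities yields $|\mathrm{im}\,\amap{\n}|=\N_{\D}(\n)/\N_{\D}(\an{a}+\n)$. Applying the first isomorphism theorem for the finite abelian group $\D/\n$, one gets $|K|=|\D/\n|/|\mathrm{im}\,\amap{\n}|=\N_{\D}(\an{a}+\n)$, which is the desired count.

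The argument is essentially formal once finiteness is in place; the only point demanding care is the passage between the norms of the various quotients, which rests on the tower identity $\N_{\D}(\n)=\N_{\D}(\an{a}+\n)\cdot|(\an{a}+\n)/\n|$ and hence on the residual finiteness of $\D$. I expect no substantive obstacle beyond the bookkeeping of the isomorphism theorems applied to the finite module $\D/\n$.
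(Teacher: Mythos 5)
Your proof is correct, but there is nothing in the paper to compare it against: the paper states this lemma with a citation to \cite[Theorem 2.3]{M14} and gives no proof of its own. Taken on its own terms, your argument is complete: solvability is indeed just the definition of the ideal sum $\an{a}+\n$, and the count follows by identifying the solution set with a coset of $K=\ker(\amap{\n})$ and combining the first isomorphism theorem, which gives $|K|=\N_{\D}(\n)/|\mathrm{im}\,\amap{\n}|$, with the third, which gives $|\mathrm{im}\,\amap{\n}|=|(\an{a}+\n)/\n|=\N_{\D}(\n)/\N_{\D}(\an{a}+\n)$; both steps are legitimate because $\D/\n$ is finite. Two remarks. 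First, your proof never uses that $\D$ is a Dedekind domain---only that the quotient $\D/\n$ is finite---so it actually establishes the lemma for any commutative ring with finite quotient, which is more general than the cited result and fits how the lemma is used later in the paper (always with $\n=\ppp^{\alpha}$). Second, both the statement and your proof implicitly require $\n$ to be nonzero: otherwise $\D/\n$ need not be finite and the count can fail (in $\Z$ with $\n=\langle 0\rangle$, the equation $2x=2$ has one solution while $\N_{\Z}(\an{2})=2$); you should flag this hypothesis explicitly, since your cardinality bookkeeping silently depends on it.
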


Let $\ppp$ be any nonzero prime ideal of $\D$ and $\alpha \geq 1$. For each nonzero element $b\in \D$, we denote by $\nu_{\ppp}(b)$ the exponent of $\ppp$ in the factorial decomposition of $\an{b}$ into product of prime ideals. Since the ideals of $\D/\n$ are exactly the ideals of the form $\Psi_{\n}(\ideal{a})$ with $\ideal{a}\mid \n$, the quotient ring $\D/\pa$ is a finite local ring. The following result has easy verification.


\begin{lemma}\label{lem:aux-1}
Let $\alpha\ge 1$ and $\m$ be the maximal ideal of $\D/\pa$. The following statements hold:
\begin{enumerate}[(i)]
\item $\m$ is the homomorphic image of $\ppp$ by $\Psi_{\pa}$ and, in particular, $\m$ is principal;
\item for any nonzero element $b\in \D$ such that $\Psi_{\pa}(b)\ne 0$, $\nu_{\ppp}(b)$ is the only nonnegative integer $i$ such that $\Psi_{\pa}(b)\in \m^i \setminus \m^{i+1}$ (with the convention $\m^0=\D/\pa$);
\item for each $0\le i\le \alpha$, $|\m^i|=\N_{\D}(\ppp^{\alpha-i})$.
\end{enumerate}
\end{lemma}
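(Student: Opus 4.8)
The plan is to exploit the lattice of ideals of the finite local ring $\D/\pa$. Since the ideals of $\D/\pa$ are precisely the images $\Psi_{\pa}(\ideal{a})$ with $\ideal{a}\mid \pa$, and the only divisors of $\pa$ are $\ppp^{0}=\D, \ppp, \dots, \ppp^{\alpha}$, the ideals of $\D/\pa$ form a chain
$$\D/\pa=\Psi_{\pa}(\ppp^{0})\supseteq \Psi_{\pa}(\ppp)\supseteq \cdots \supseteq \Psi_{\pa}(\ppp^{\alpha})=0.$$
This identifies the unique maximal ideal as $\m=\Psi_{\pa}(\ppp)$, which is the first assertion of (i). Moreover, since $\Psi_{\pa}$ is a ring homomorphism, $\m^{i}=\Psi_{\pa}(\ppp)^{i}=\Psi_{\pa}(\ppp^{i})$ for every $i\ge 0$, a formula that drives the rest of the proof. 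I would also record here that the inclusions above are strict for $0\le i<\alpha$, which follows from $\ppp^{i}\ne \ppp^{i+1}$ (unique factorization of ideals in $\D$).

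For the principality claim in (i), I would pick an element $\pi\in \ppp\setminus \ppp^{2}$, so that $\nu_{\ppp}(\pi)=1$ and hence $\an{\pi}=\ppp\ideal{b}$ with $\gcd(\ideal{b},\ppp)=1$. Then $\an{\pi}+\pa=\ppp\ideal{b}+\ppp\,\ppp^{\alpha-1}=\ppp(\ideal{b}+\ppp^{\alpha-1})=\ppp$, using $\gcd(\ideal{b},\ppp^{\alpha-1})=1$. Passing to the quotient gives $\m=\Psi_{\pa}(\ppp)=\Psi_{\pa}(\an{\pi})=\an{\Psi_{\pa}(\pi)}$, so $\m$ is principal. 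This step, extracting principality from the Dedekind structure, is the one delicate point; everything else is bookkeeping. (Alternatively one may invoke the isomorphism $\D/\pa\cong \D_{\ppp}/\ppp^{\alpha}\D_{\ppp}$ with the discrete valuation ring $\D_{\ppp}$, where principality of the maximal ideal is immediate.)

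For (ii), I would combine $\m^{i}=\Psi_{\pa}(\ppp^{i})$ with the elementary equivalence that $\Psi_{\pa}(b)\in \Psi_{\pa}(\ppp^{i})$ holds if and only if $b\in \ppp^{i}+\pa=\ppp^{i}$ (the last equality holding for $i\le \alpha$), which says $\Psi_{\pa}(b)\in \m^{i}$ if and only if $\nu_{\ppp}(b)\ge i$. Hence for $b$ with $\Psi_{\pa}(b)\ne 0$, that is $\nu_{\ppp}(b)<\alpha$, the integer $i=\nu_{\ppp}(b)$ is the unique index with $\Psi_{\pa}(b)\in \m^{i}\setminus \m^{i+1}$, uniqueness being guaranteed by the strict descent of the chain. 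Finally, for (iii) I would compute $|\m^{i}|=|\Psi_{\pa}(\ppp^{i})|=|\ppp^{i}/\pa|$ and apply the third isomorphism theorem together with complete multiplicativity of the norm:
$$|\ppp^{i}/\pa|=\frac{\N_{\D}(\pa)}{\N_{\D}(\ppp^{i})}=\frac{\N_{\D}(\ppp)^{\alpha}}{\N_{\D}(\ppp)^{i}}=\N_{\D}(\ppp^{\alpha-i}),$$
which is the desired count.
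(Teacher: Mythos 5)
Your proof is correct and complete. The paper itself offers no argument for this lemma (it is introduced with ``the following result has easy verification''), and what you wrote is precisely the routine verification the authors have in mind: it starts from the ideal correspondence $\ideal{a}\mapsto \Psi_{\pa}(\ideal{a})$ for $\ideal{a}\mid \pa$ stated immediately before the lemma, obtains principality from a uniformizer $\pi\in\ppp\setminus\ppp^{2}$ via $\an{\pi}+\pa=\ppp$, and counts $|\m^{i}|=|\ppp^{i}/\pa|$ using the third isomorphism theorem and complete multiplicativity of $\N_{\D}$.
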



From the previous lemma, we obtain the following result.

\begin{lemma}\label{lem:phi-1}
Let $\D$ be a residually finite Dedekind Domain and $\n\in \D$ a nonzero ideal. For each ideal $\m$ dividing $\n$, there exists $\varphi_{\D}(\n/\m)$ incongruent elements $b\in \D$ modulo $\n$ such that $\gcd(\an{b}, \n)=\m$.
\end{lemma}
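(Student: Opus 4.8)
The plan is to reduce the count to the prime-power case via the Chinese Remainder Theorem and then count elements of prescribed $\ppp$-adic valuation using Lemma~\ref{lem:aux-1}. First I would factor $\n=\prod_{i=1}^{r}\ppp_i^{a_i}$ and, using $\m\mid\n$, write $\m=\prod_{i=1}^{r}\ppp_i^{c_i}$ with $0\le c_i\le a_i$. Since the ideals $\ppp_i^{a_i}$ are pairwise relatively prime, the Chinese Remainder Theorem yields a ring isomorphism $\D/\n\cong\prod_{i=1}^{r}\D/\ppp_i^{a_i}$ sending the class of $b$ to $(\Psi_{\ppp_i^{a_i}}(b))_{1\le i\le r}$. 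In particular, counting the classes modulo $\n$ satisfying a condition that is local at each $\ppp_i$ amounts to multiplying the corresponding local counts.

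Next I would rephrase the hypothesis $\gcd(\an{b},\n)=\m$ as a condition on valuations. Because $\gcd(\an{b},\n)=\an{b}+\n$ and $\nu_{\ppp_i}(\an{b}+\n)=\min(\nu_{\ppp_i}(b),a_i)$, the requirement $\gcd(\an{b},\n)=\m$ is equivalent to $\min(\nu_{\ppp_i}(b),a_i)=c_i$ for every $i$. When $c_i<a_i$ this means exactly $\nu_{\ppp_i}(b)=c_i$, and when $c_i=a_i$ it means $\Psi_{\ppp_i^{a_i}}(b)=0$. Each of these is a condition on the single local component $\Psi_{\ppp_i^{a_i}}(b)$, so the total count factors as a product over $i$.

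For a fixed index, write $\ppp=\ppp_i$, $\alpha=a_i$, $c=c_i$, and let $\ideal{M}_i$ be the maximal ideal of $\D/\pa$. By Lemma~\ref{lem:aux-1}(ii), the condition $\nu_{\ppp}(b)=c$ (with $c<\alpha$) is equivalent to $\Psi_{\pa}(b)\in\ideal{M}_i^{c}\setminus\ideal{M}_i^{c+1}$, and by part (iii) the number of such classes is $|\ideal{M}_i^{c}|-|\ideal{M}_i^{c+1}|=\N_{\D}(\ppp^{\alpha-c})-\N_{\D}(\ppp^{\alpha-c-1})=\varphi(\ppp^{\alpha-c})$, where the last equality is \eqref{eq:norm-phi}. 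The degenerate case $c=\alpha$ contributes the single class $0$, which agrees with $\varphi(\ppp^{\alpha-c})=\varphi_{\D}(\D)=1$, so the local count is $\varphi(\ppp^{a_i-c_i})$ uniformly. Multiplying over $i$ and invoking the multiplicativity of $\varphi_{\D}$ together with $\n/\m=\prod_{i=1}^{r}\ppp_i^{a_i-c_i}$ gives the total count $\prod_{i=1}^{r}\varphi(\ppp_i^{a_i-c_i})=\varphi_{\D}(\n/\m)$, as claimed.

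I expect the only delicate steps to be the valuation identity $\nu_{\ppp_i}(\an{b}+\n)=\min(\nu_{\ppp_i}(b),a_i)$ and the uniform treatment of the boundary case $c_i=a_i$ inside the formula $\varphi(\ppp^{\alpha-c})$; once these are settled, Lemma~\ref{lem:aux-1} and \eqref{eq:norm-phi} do the rest.
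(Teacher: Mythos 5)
Your proof is correct and follows essentially the same route as the paper's: a Chinese Remainder Theorem reduction to the prime-power case $\n=\ppp^{\alpha}$, $\m=\ppp^{c}$, followed by counting classes with $\Psi_{\pa}(b)\in\m^{c}\setminus\m^{c+1}$ via Lemma~\ref{lem:aux-1} and Eq.~\eqref{eq:norm-phi}. If anything, you are slightly more explicit than the paper in handling the saturated components $c_i=a_i$ (where the local count is the single class $0$, matching $\varphi_{\D}(\D)=1$), a case the paper disposes of only through its initial ``$\m=\n$ is trivial'' remark.
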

\begin{proof}
If $\m=\n$, the result is trivial so we assume that $\m\ne \n$. Let $k({\m}, \n)$ be the number of incongruent elements $b\in \D$ modulo $\n$ such that $\gcd(\an{b}, \n)=\m$. From the Chinese remainder theorem, $k({\m}_0, \n_0)\cdot k(\m_1, \n_1)=k(\m_0\m_1, \n_0\n_1)$ whenever $\m_0+\m_1=\n_0+\n_1=\D$. So we only need to consider the case that $\n=\ppp^{\alpha}$ for some $\alpha\ge 1$ and some (nonzero) prime ideal $\ppp$ of $\D$ and $\m=\ppp^i$ with $0\le i<\alpha$. However, in this case, we observe that a nonzero element $b\in \D$ is such that $\gcd(\an{b}, \ppp^{\alpha})=\ppp^i$ with $0\le i<\alpha$, if and only if $\nu_{\ppp}(b)=i$, i.e., $\Psi_{\ppp^{\alpha}}(b)\in \m^i\setminus \m^{i+1}$. Therefore, $k(\ppp^i)=|\m^i\setminus \m^{i+1}|=|\m^i|-|\m^{i+1}|$ and so, from Lemma~\ref{lem:aux-1} and Eq.\eqref{eq:norm-phi}, we have that
$$k(\ppp^i)=|\m^i|-|\m^{i+1}|=\N_{\D}(\ppp^{\alpha-i})-\N_{\D}(\ppp^{\alpha-i-1})=\varphi_{\D}(\ppp^{\alpha-i}).$$
\end{proof}

\subsection{Operations on functional graphs, elementary trees and $\nu$-series}

Most definitions and notations introduced here are taken from \cite{QP15}. We denote
by $\bigoplus_{i=1}^{k}G_{i}$ the disjoint union of the graphs
$G_1,\ldots,G_k$ and $k\times G = \bigoplus_{i=1}^{k}G$ for $k\in \Z^{+}$. If $m\in \Z^+$ and $T$ is a rooted tree, we denote by
$\mathrm{Cyc}(m,T)$ a graph with a unique directed cycle of length $m$, where
every node in this cycle is the root of a tree isomorphic to $T$. The tree $T$ with a unique node is denoted by $\bullet$. An extended rooted tree is a graph of the form $\mathrm{Cyc}(1,T)$ for some rooted tree $T$, in this case we write $\{T\} = \mathrm{Cyc}(1,T)$. A {\it forest} is a disjoint union of rooted trees and an {\it extended forest} is a disjoint union of rooted trees and extended rooted trees.
If  $G= \bigoplus_{i=1}^{k}T_{i}$ is a forest where  $T_1,\ldots, T_k$ are rooted trees, we denote by
$\langle G \rangle$ a rooted tree verifying that its root has exactly $k$ predecessors $v_1,\ldots,v_k$ where $v_i$ is the
root of a tree isomorphic to $T_i$ for $i=1,\ldots,k$.  \\

To each non-increasing finite sequence of positive integers $V=(\nu_1,\nu_2,\ldots, \nu_d)$ (i.e. $\nu_1\geq \nu_2 \geq \cdots, \nu_d \geq 1$) we can associate a rooted tree $T_V$ defined recursively as follows:

\begin{equation}\label{TreeAssociatedEq}
\left\{ \begin{array}{l}
           T_{V}^{0}= \bullet,   \\
          G_{V}^{k}= \nu_k \times T^{k-1} \oplus
                 \bigoplus_{i=1}^{k-1}(\nu_{i}-\nu_{i+1})\times T^{i-1}
                  \textrm{ and } T_V^{k}=\langle G_{V}^{k} \rangle \textrm { for } 1\leq k <D, \\
    G_{V} = \langle (\nu_d-1) \times T^{D-1} \oplus
                 \bigoplus_{i=1}^{D-1}(\nu_{i}-\nu_{i+1})\times T^{i-1}
                 \rangle \textrm{ and } T_V= \langle G_{V} \rangle.
         \end{array}
\right.\end{equation}

Trees associated with non-increasing sequences as above are called {\it elementary trees}; see Figure \ref{fig:elementary_tree}. We note that, by definition, if $V=(\nu_1,\ldots, \nu_d)$ and $W=(\nu_1,\ldots,\nu_d, 1, 1, \ldots, 1)$ then $T_V= T_W$.
\begin{figure}[ht]
\centering
\includegraphics[width=0.9\textwidth]{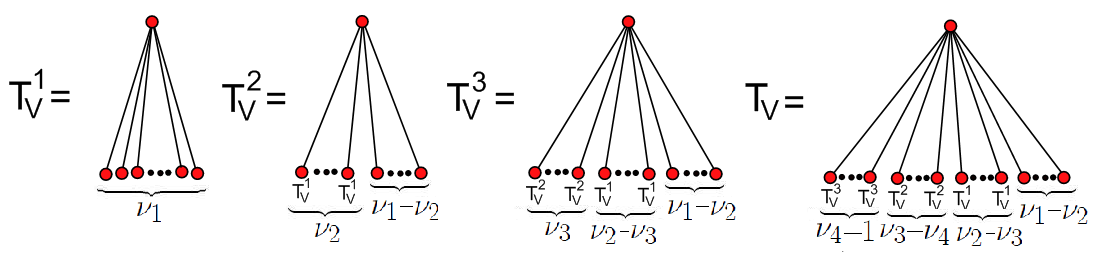}
\vspace*{-8pt}
\caption{This figure (taken from \cite{QP15}) illustrates the inductive
definition of $T_V$ for $V=(\nu_1,\nu_2,\nu_3,\nu_4)$. A node $v$ labelled by a rooted tree $T$
indicates that $v$ is the root of a tree isomorphic to $T$.}
\label{fig:elementary_tree}
\end{figure}
Next we extend the definition of $\nu$-series introduced in \cite{QP15} to Dedekind domains.

\begin{define}
Let $\D$ be a Dedekind domain, $a\in \D$ be a nonzero element and $\n_0 \unlhd \D$ be a nonzero ideal such that $\an{a} \subseteq \mathrm{rad}(\n_0)$. The $\nu$-series associated with $\n_0$ and $a$ is the sequence of positive integers $\n_0(a):=(N(\ideal{a}_1),\ldots, N(\ideal{a}_d))$ where $N(\ideal{a})$ denotes the norm of the ideal $\ideal{a}$ and the ideals $\ideal{a}_1,\ldots, \ideal{a}_d$ are given as follows:
$$\ideal{a}_1=\gcd(\n_0,\an{a}), \quad \ideal{a}_{i+1}=
  \gcd\left(\frac{\n_0}{\ideal{a}_1\cdots \ideal{a}_i}, \an{a}\right), \mbox{ for } 1\leq i \leq d,$$
where $d$ is the least positive integer such that $\ideal{a}_1\cdots \ideal{a}_d =\n_0$. When $\n_0=\langle b \rangle$ is a principal ideal of $\D$ we denote $b(a):=\n_0(a)$. 
\end{define}

The next result is a direct consequence of the fact that $\gcd(\ideal{q}\ideal{q}', \ideal{n})= \gcd(\ideal{q}, \ideal{n})\cdot \gcd(\ideal{q}', \ideal{n})$ if $\gcd(\ideal{q},\ideal{q}')=1$ and the definition of $\nu$-series.

\begin{lemma}\label{lem:vseriesproduct}
Let $\D$ be a Dedekind domain, $a\in \D$ be a nonzero element and $\ideal{q},\ideal{q}'$ be nonzero ideals such that $\an{a} \subseteq \mathrm{rad}(\ideal{q})$, $\an{a} \subseteq \mathrm{rad}(\ideal{q}')$ and $\gcd(\ideal{q}, \ideal{q}')=1$. Suppose that $\ideal{q}(a):=(N(\ideal{a}_1),\ldots, N(\ideal{a}_d))$ and $\ideal{q}'(a):=(N(\ideal{a}'_1),\ldots, N(\ideal{a}'_{d'}))$ with $d'\leq d$. Then, $\ideal{q}\ideal{q}'(a):=(N(\ideal{b}_1),\ldots, N(\ideal{b}_d))$ with 
$$\ideal{b}_i = \left\{ \begin{array}{ll} \ideal{a}_i \ideal{a}'_i & \textrm{for }1\leq i \leq d';\\
\ideal{a}_i & \textrm{for } d'<i\leq d .
\end{array}  \right.$$
\end{lemma}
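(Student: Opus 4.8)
The plan is to prove, by induction on $i$, a slightly stronger statement about the \emph{defining ideals} of the $\nu$-series of $\ideal{q}\ideal{q}'$, and then read off the norms using complete multiplicativity of $N$. Write $\ideal{b}_1,\ideal{b}_2,\ldots$ for the ideals defining $\ideal{q}\ideal{q}'(a)$, so that $\ideal{b}_1=\gcd(\ideal{q}\ideal{q}',\an{a})$ and $\ideal{b}_{i+1}=\gcd(\ideal{q}\ideal{q}'/(\ideal{b}_1\cdots\ideal{b}_i),\an{a})$. I would introduce the partial products $A_i=\ideal{a}_1\cdots\ideal{a}_i$ and $A'_i=\ideal{a}'_1\cdots\ideal{a}'_i$, extending the latter by the convention $A'_i=\ideal{q}'$ and $\ideal{a}'_i=\D$ for $i>d'$ (legitimate since $A'_{d'}=\ideal{q}'$ by definition of $d'$). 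The claim to carry through the induction is the pair of identities $\ideal{b}_i=\ideal{a}_i\ideal{a}'_i$ and $\ideal{b}_1\cdots\ideal{b}_i=A_iA'_i$, from which the stated formula is immediate.

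For the base case, the multiplicativity $\gcd(\ideal{q}\ideal{q}',\an{a})=\gcd(\ideal{q},\an{a})\cdot\gcd(\ideal{q}',\an{a})$ quoted just before the statement gives $\ideal{b}_1=\ideal{a}_1\ideal{a}'_1$ at once. For the inductive step I would assume $\ideal{b}_1\cdots\ideal{b}_i=A_iA'_i$ and compute
\[
\ideal{b}_{i+1}=\gcd\!\left(\frac{\ideal{q}\ideal{q}'}{A_iA'_i},\an{a}\right)=\gcd\!\left(\frac{\ideal{q}}{A_i}\cdot\frac{\ideal{q}'}{A'_i},\an{a}\right),
\]
where the rewriting uses $A_i\mid\ideal{q}$ and $A'_i\mid\ideal{q}'$. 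The key observation is that $\ideal{q}/A_i$ divides $\ideal{q}$ while $\ideal{q}'/A'_i$ divides $\ideal{q}'$, so from $\gcd(\ideal{q},\ideal{q}')=1$ these two quotients are again relatively prime; applying the multiplicativity of $\gcd$ a second time yields $\ideal{b}_{i+1}=\gcd(\ideal{q}/A_i,\an{a})\cdot\gcd(\ideal{q}'/A'_i,\an{a})=\ideal{a}_{i+1}\ideal{a}'_{i+1}$, and multiplying by the inductive hypothesis gives $\ideal{b}_1\cdots\ideal{b}_{i+1}=A_{i+1}A'_{i+1}$, closing the induction.

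The one point deserving care — the only place where the two regimes $i\le d'$ and $i>d'$ interact — is the behavior at index $d'$: once $i\ge d'$ we have $A'_i=\ideal{q}'$, so $\ideal{q}'/A'_i=\D$ and the second gcd collapses to $\gcd(\D,\an{a})=\D$, consistent with the convention $\ideal{a}'_{i+1}=\D$; thus $\ideal{b}_{i+1}=\ideal{a}_{i+1}$, matching the branch $\ideal{b}_i=\ideal{a}_i$ for $d'<i\le d$. Finally I would check that the process terminates exactly at index $d$: since $\ideal{a}_1\cdots\ideal{a}_d=\ideal{q}$ and $\ideal{a}'_1\cdots\ideal{a}'_{d'}=\ideal{q}'$, we obtain $\ideal{b}_1\cdots\ideal{b}_d=A_dA'_d=\ideal{q}\ideal{q}'$, while $\ideal{b}_d=\ideal{a}_d\ne\D$ (or $\ideal{a}_d\ideal{a}'_d\ne\D$ when $d=d'$) guarantees that $d$ is the least such index. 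Taking norms and using that $N$ is completely multiplicative turns $\ideal{b}_i=\ideal{a}_i\ideal{a}'_i$ into $N(\ideal{b}_i)=N(\ideal{a}_i)N(\ideal{a}'_i)$, which is precisely the asserted description of $\ideal{q}\ideal{q}'(a)$. I do not expect a genuine obstacle here: the argument is essentially bookkeeping, the only substantive ingredient being the twofold application of the multiplicativity of $\gcd$ over coprime ideals.
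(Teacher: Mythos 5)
Your proof is correct and follows exactly the route the paper has in mind: the paper states this lemma without proof, remarking only that it is ``a direct consequence of the fact that $\gcd(\ideal{q}\ideal{q}', \ideal{n})= \gcd(\ideal{q}, \ideal{n})\cdot \gcd(\ideal{q}', \ideal{n})$ if $\gcd(\ideal{q},\ideal{q}')=1$ and the definition of $\nu$-series,'' and your induction is precisely a careful write-up of that observation. The bookkeeping with the partial products $A_i$, $A'_i$, the convention $\ideal{a}'_i=\D$ for $i>d'$, and the termination check at index $d$ are all sound.
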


The tree $T_{\n_0(a)}$ associated with the $\nu$-series $\n_0(a)$ plays an important role in the description of the functional graph $\mathcal{G}(\Gamma_{a,\n})$.

\subsection{Multiplicativity of $\nu$-series trees}

Here we prove a multiplicative property of elementary trees with respect to the tensor product. We recall that if $G_1$ and $G_2$ are directed graphs with vertex sets $V_{G_1}$ and $V_{G_2}$, respectively, their tensor product $G_1 \otimes G_2$ is a directed graph with vertex set $V_{G}= V_{G_1} \times V_{G_2}$ and $(v_1,v_2)\to (w_1,w_2)$ is an edge in $G_1\otimes G_2$ if and only if $v_i\to w_i$ is an edge in $G_i$, for $i=1,2$. Note that the tensor product is commutative (i.e., $G_1\otimes G_2$ and $G_2\otimes G_1$ are isomorphic) and distributive with respect to the disjoint sum of graphs $\oplus$. For two non-increasing sequences of positive integers $U=(u_1, \ldots, u_d)$ and $V=(v_1, \ldots, v_d)$ we define their product $UV = VU := (u_1v_1, \ldots, u_dv_d)$.\\

We note that if $T$ and $T'$ are rooted trees with $N$ and $N'$ nodes, their tensor product $T \otimes T'$ is a forest with exactly $N+N'-1$ rooted trees, where the roots are exactly the vertices of the form $(r_T,t')$ with $t' \in T'$ and $(t,r_{T'})$ with $t \in T$ (here $r_T$ and $r_{T'}$ denote the roots of $T$ and $T'$, respectively). It is convenient to introduce a new operation.

\begin{define}
Let $T$ and $T'$ be two rooted trees with roots $r_T$ and $r_{T'}$, respectively. Set $T_1 = T$ or $\{T\}$, and $T_2 = T'$ or $\{T'\}$. the {\it restricted tensor product} of $T_1$ and $T_2$ (denoted by $T_1 \rtp T_2$) is the connected component of $T_1 \otimes T_2$ containing the node $(r_T, r_T')$. 
\end{define} 

Note that $T \otimes T'$ is a rooted tree with root $(r_T,r_{T'})$ (it is the hanging tree of $(r_T, r_{T'})$ in $T\otimes T'$). If we denote by $d(x,y)$ the length of the smaller directed path from $x$ to $y$ (if there is any), we have:
\begin{itemize}
\item $V_{T \rtp T'} = \{(x,x') \in V_{T}\times V_{T'}: d(x,r_T)=d(x,r_{T'})   \}$, 
\item $V_{T \rtp \{T'\}} = \{(x,x') \in V_{T}\times V_{T'}: d(x,r_{T})\geq d(x',r_{T'})  \}$,
\item $V_{\{T\} \rtp T'} = \{(x,x') \in V_{T}\times V_{T'}: d(x,r_{T})\leq d(x',r_{T'})  \}$.
\end{itemize}

The next proposition gives some useful properties of the restricted tensor product whose proofs are straightforward.

\begin{prop}\label{prop:RestrictedTensorProduct}
Let $G$ and $G'$ be two forests. Let $T=\langle G \rangle$ and $T'=\langle G' \rangle$. The next properties hold:
\begin{enumerate}[(i)]
\item $T \rtp T'  = \langle G \rtp G' \rangle$;
\item $T \rtp \{T'\} = \langle G \rtp G' \oplus G \rtp \{T'\} \rangle$ and $\{T\} \rtp T' = \langle G \rtp G' \oplus \{T\} \rtp G' \rangle$;
\item $\{T\} \otimes \{T'\} =  \langle G \rtp G' \oplus G \rtp \{T'\} \oplus \{T\} \rtp G' \rangle$.
\end{enumerate}
\end{prop}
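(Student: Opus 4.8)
The plan is to reduce all three identities to a purely local analysis at the distinguished node $(r_T,r_{T'})$, exploiting that $T=\langle G\rangle$ and $T'=\langle G'\rangle$ are each obtained by attaching a single fresh root, and that $\otimes$ distributes over $\oplus$. First I would record that the restricted tensor product inherits this distributivity: writing $G=\bigoplus_i T_i$ and $G'=\bigoplus_j T'_j$, taking connected components commutes with $\oplus$, so $G\rtp G'=\bigoplus_{i,j}T_i\rtp T'_j$, $G\rtp\{T'\}=\bigoplus_i T_i\rtp\{T'\}$ and $\{T\}\rtp G'=\bigoplus_j\{T\}\rtp T'_j$. Thus each identity amounts to describing the predecessors of $(r_T,r_{T'})$ together with the subtree hanging from each of them.

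Next I would identify these predecessors. In $T$ the predecessors of $r_T$ are exactly the roots $r_{T_i}$, while in $\{T\}$ they are the $r_{T_i}$ together with $r_T$ itself (from the loop $\mathrm{Cyc}(1,\cdot)$ at the root); similarly for $T'$ and $\{T'\}$. Multiplying these out, the predecessors of $(r_T,r_{T'})$ lying in the component of $(r_T,r_{T'})$ are: the nodes $(r_{T_i},r_{T'_j})$ in $T\otimes T'$; the $(r_{T_i},r_{T'_j})$ and $(r_{T_i},r_{T'})$ in $T\otimes\{T'\}$; and the $(r_{T_i},r_{T'_j})$, $(r_{T_i},r_{T'})$ and $(r_T,r_{T'_j})$ in $\{T\}\otimes\{T'\}$, the remaining product $(r_T,r_{T'})$ being the loop itself.

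The core step is to show that the tree hanging from a given predecessor is the expected restricted tensor product. For this I would use the vertex descriptions preceding the statement together with the elementary depth shift: for $x$ in a branch $T_i$ one has $d(x,r_T)=1+d(x,r_{T_i})$, and likewise on the $T'$ side. For instance, the hanging tree of $(r_{T_i},r_{T'_j})$ consists of the $(x,x')$ whose forward orbit meets $(r_{T_i},r_{T'_j})$; using $d(x,r_T)=d(x',r_{T'})$ and the depth shift, this set is exactly $\{(x,x'):x\in T_i,\ x'\in T'_j,\ d(x,r_{T_i})=d(x',r_{T'_j})\}=V_{T_i\rtp T'_j}$, and the induced edges agree, so the hanging tree is $T_i\rtp T'_j$. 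An analogous computation, now using $d(x,r_T)\ge d(x',r_{T'})$ and the loop that lets the second coordinate idle at $r_{T'}$, identifies the hanging tree of $(r_{T_i},r_{T'})$ with $T_i\rtp\{T'\}$, and symmetrically for $(r_T,r_{T'_j})$ with $\{T\}\rtp T'_j$. Collecting hanging trees over all predecessors yields (i), (ii) and (iii), with commutativity of $\otimes$ supplying the second identity in (ii) for free. In (iii), since both factors carry a loop at the root the product $\{T\}\otimes\{T'\}$ is connected and $(r_T,r_{T'})$ is a fixed point, so $\{T\}\otimes\{T'\}=\{T\}\rtp\{T'\}$ and the hanging tree at its root is precisely the displayed $\langle G\rtp G'\oplus G\rtp\{T'\}\oplus\{T\}\rtp G'\rangle$, which is the content of (iii).

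The main obstacle I anticipate is the bookkeeping in this core step: one must check that the families of predecessors are pairwise distinct and that their hanging trees partition the entire relevant component, so that no vertex is double counted and none is missed. This is where the inequality $d(x,r_T)\ge d(x',r_{T'})$ as opposed to the equality $d(x,r_T)=d(x',r_{T'})$ must be handled carefully; in particular one must verify that a vertex with $x\in T_i$ and $d(x,r_{T_i})\ge d(x',r_{T'})$ routes through $(r_{T_i},r_{T'})$ rather than through some $(r_{T_i},r_{T'_j})$, and conversely, after which the three identities follow by matching connected components on both sides.
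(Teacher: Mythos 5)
Your argument is correct. The paper offers no proof of this proposition at all (it is introduced with the remark that the proofs are straightforward), and your root-local analysis -- listing the predecessors of $(r_T,r_{T'})$, identifying each of their hanging trees via the depth shift $d(x,r_T)=1+d(x,r_{T_i})$, and checking that the routing cases (equality $d(x,r_T)=d(x',r_{T'})$ through some $(r_{T_i},r_{T'_j})$ versus $d(x,r_{T_i})\ge d(x',r_{T'})$ through $(r_{T_i},r_{T'})$) are exclusive and exhaustive -- is exactly the straightforward argument being alluded to, including the needed observation that $\rtp$ extends to forests by distributivity, which the paper uses but never spells out. One point where you are more precise than the paper's own statement: in (iii) the product $\{T\}\otimes\{T'\}$ carries a loop at $(r_T,r_{T'})$, so the literal equality should read $\{T\}\otimes\{T'\}=\bigl\{\langle G\rtp G'\oplus G\rtp\{T'\}\oplus\{T\}\rtp G'\rangle\bigr\}$; your formulation (the product is connected, $(r_T,r_{T'})$ is its unique fixed point, and its hanging tree is the displayed $\langle\,\cdot\,\rangle$) is the version actually invoked later in the proof of Proposition \ref{prop:tree-1}.
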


The next lemma establishes a relation between the partial trees associate with $\nu$-series.

\begin{lemma}\label{lem:tree-1}
Let $V=(v_1, \ldots, v_d)$ and $U=(u_1, \ldots, u_d)$ be non-increasing sequences of positive integers. Then the following hold:

\begin{enumerate}[(i)]
\item $T_V^i\rtp T_U^j=T_{UV}^{\min\{i, j\}}$ for any $0\le i, j\le d$;
\item $T_V^i\rtp \{T_U\}=T_{UV}^i=T_U^i\rtp \{T_V\}$ for any $0\le i\le d$.
\end{enumerate}

\end{lemma}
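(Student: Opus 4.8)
The plan is to prove both items simultaneously by induction, reducing each restricted tensor product to an identity between forests via Proposition~\ref{prop:RestrictedTensorProduct} and then matching the multiplicities of the partial trees $T_{UV}^{m-1}$ appearing on both sides. Throughout I write $w_m = u_m v_m$, so that $UV = (w_1,\ldots,w_d)$, and I keep in mind that $T_V^k = \langle G_V^k\rangle$ with $G_V^k = v_k\times T_V^{k-1}\oplus\bigoplus_{r=1}^{k-1}(v_r-v_{r+1})\times T_V^{r-1}$. For a fixed target index $i$ I denote by $a_r$ the multiplicity of $T_V^{r-1}$ in $G_V^i$, so that $a_i=v_i$ and $a_r=v_r-v_{r+1}$ for $r<i$; the multiplicities in $G_U^j$ are $u_j$ and $u_s-u_{s+1}$, while in the top forest $G_U$ the tallest branch $T_U^{d-1}$ carries multiplicity $u_d-1$. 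Since $\otimes$ is commutative (hence so is $\rtp$) and $UV=VU$, I may assume $i\le j$ in item (i), and item (ii) is symmetric in $U$ and $V$, so it suffices there to treat $T_V^i\rtp\{T_U\}$.

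For item (i) I induct on $m=\min\{i,j\}=i$. The base case $i=0$ is immediate from the vertex description $V_{T\rtp T'}=\{(x,x'):d(x,r_T)=d(x',r_{T'})\}$, which forces the second coordinate to be the root, giving $\bullet\rtp T_U^j=\bullet=T_{UV}^0$. For the step, Proposition~\ref{prop:RestrictedTensorProduct}(i) gives $T_V^i\rtp T_U^j=\langle G_V^i\rtp G_U^j\rangle$, and distributing $\rtp$ over $\oplus$ and over the scalars $\times$ writes $G_V^i\rtp G_U^j$ as a disjoint union of copies of $T_V^{k-1}\rtp T_U^{l-1}$ over $1\le k\le i$, $1\le l\le j$. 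Each factor has $\min\{k-1,l-1\}<i$, so the inductive hypothesis replaces it by $T_{UV}^{\min\{k-1,l-1\}}$. Grouping by the value of $\min\{k,l\}=m$ (splitting into $k=m,\,l\ge m$ and $l=m,\,k>m$) and using the telescoping identities $\sum_{l=m}^{j}(\text{mult.\ of }T_U^{l-1})=u_m$ and $\sum_{k=m+1}^{i}a_k=v_{m+1}$, a short computation collapses the multiplicity of $T_{UV}^{m-1}$ to $w_m-w_{m+1}$ for $m<i$ and to $w_i$ for $m=i$. These are exactly the multiplicities defining $G_{UV}^i$, so $G_V^i\rtp G_U^j=G_{UV}^i$ and hence $T_V^i\rtp T_U^j=T_{UV}^{\min\{i,j\}}$.

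For item (ii) I induct on $i$, treating item (i) as already established. The base case follows from $V_{T\rtp\{T'\}}=\{(x,x'):d(x,r_T)\ge d(x',r_{T'})\}$. For the step, Proposition~\ref{prop:RestrictedTensorProduct}(ii) gives $T_V^i\rtp\{T_U\}=\langle G_V^i\rtp G_U\oplus G_V^i\rtp\{T_U\}\rangle$. The first summand is handled by item (i) exactly as above, except that $l$ now ranges up to $d$ and the top multiplicity of $G_U$ is $u_d-1$, so the telescoping becomes $\sum_{l=m}^{d}(\text{mult.\ of }T_U^{l-1})=u_m-1$; this yields multiplicity $(w_m-w_{m+1})-(v_m-v_{m+1})$ for $m<i$. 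The second summand distributes as $\bigoplus_k a_k\times(T_V^{k-1}\rtp\{T_U\})$, and the inductive hypothesis identifies each factor with $T_{UV}^{k-1}$, contributing an additional $a_m=v_m-v_{m+1}$ copies of $T_{UV}^{m-1}$. Adding the two contributions, the surplus $-(v_m-v_{m+1})$ coming from the $-1$ in $G_U$ is cancelled precisely by the $+a_m$ from the self-loop term, again leaving $w_m-w_{m+1}$ for $m<i$ and $w_i$ for $m=i$; hence the bracketed forest is $G_{UV}^i$ and $T_V^i\rtp\{T_U\}=T_{UV}^i$.

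The main obstacle is the bookkeeping of multiplicities: organizing the double sum by the value of $\min\{k,l\}$ and verifying that the telescoping sums collapse to the differences $w_m-w_{m+1}$. The delicate point, and the reason item (ii) requires the full tree $T_U$ rather than a partial one, is that the $-1$ in the top multiplicity of $G_U$ must be compensated exactly by the self-loop contribution entering through $G_V^i\rtp\{T_U\}$; checking this cancellation and tracking the boundary cases $m=i$ and $m=d$ is where care is needed, whereas the reductions via Proposition~\ref{prop:RestrictedTensorProduct} and the commutativity of $\otimes$ are routine.
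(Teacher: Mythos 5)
Your proof is correct and follows essentially the same route as the paper's: both parts use Proposition~\ref{prop:RestrictedTensorProduct} to reduce each restricted tensor product to a forest identity, induct on $\min\{i,j\}$ (resp.\ on $i$), and match multiplicities of the partial trees $T_{UV}^{m-1}$ via the same telescoping computation yielding $u_mv_m-u_{m+1}v_{m+1}$. In fact your treatment of item (ii) — explicitly exhibiting the cancellation between the $-1$ in the top multiplicity of $G_U$ and the contribution of $G_V^i\rtp\{T_U\}$ — spells out details the paper leaves to the reader ("reordering the terms as in the proof of item (i)"), so no gap remains.
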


\begin{proof}

We proceed by induction on $s=\min\{i, j\}$ to prove part (i). If $s=0$, the result is trivial. Suppose that the result holds for any $l\le s$ with $s\ge 0$ and let $0\le i, j\le d$ be such that $\min\{i, j\}=s+1$. Hence, $s+1\ge 1$. Without loss of generality, suppose that $i=s+1 \leq j$. Using item (i) of Proposition \ref{prop:RestrictedTensorProduct} we have that

$$T_V^i\rtp T_U^j=\left\langle \left(v_{s+1}\times T_V^{s}\oplus \bigoplus_{k=1}^{s}(v_k-v_{k+1})\times T_V^{k-1}\right)\rtp  \left(u_j\times T_U^{j-1}\oplus \bigoplus_{l=1}^{j-1}(u_l-u_{l+1})\times T_U^{l-1}\right)\right\rangle.$$

From induction hypothesis, $T_V^a\rtp T_U^b=T_{UV}^{\min\{a, b\}}$ whenever $0\le \min\{a, b\}\le s$. From this fact, we may infer that
$$T_V^i\rtp T_U^j=\left\langle w_{s+1}\times T_{UV}^s\oplus \bigoplus_{k=1}^sw_k\times T_{UV}^{k-1}\right\rangle,$$
where the numbers $w_k$ are given as follows. For each $1\le k\le s$,
$$w_k=(v_{k}-v_{k+1})\left(u_j+\sum_{k\le l\le j-1}(u_l-u_{l+1})\right)+(u_k-u_{k+1})\left(v_{s+1}+\sum_{k<l\le s}(v_l-v_{l+1})\right)=$$
$$(v_k-v_{k+1})u_k+(u_k-u_{k+1})v_{k+1}=u_kv_k-u_{k+1}v_{k+1}.$$
In addition, 
$$w_{s+1}=v_{s+1}\left(u_j+\sum_{s+1\le l\le j-1}(u_l-u_{l+1})\right)=u_{s+1}v_{s+1}.$$

Now we prove part (ii) by induction on $i$. The case $i=0$ is straightforward. Let $i: 1\leq i \leq d$ and suppose that the result holds for any $k: 0\leq k <i$. Using part (ii) of Proposition \ref{prop:RestrictedTensorProduct} we have that 
$$T_V^i\rtp \{T_U\}=\left\langle G_{V}^{i} \rtp \left( G_{U} \oplus \{T_{U}\} \right) \right\rangle.  $$
Then, we write $G_{V}^{i} \rtp \left( G_{U} \oplus \{T_{U}\} \right)$ as a disjoint sum of products of the form $T_V^k\rtp T_U^l$ and $T_V^k\rtp \{T_U\}$ (this last case, $k\le i-1$), which are easily computed using item (i) and the induction hypothesis. Reordering the terms as in the proof of item (i), we easily obtain the desired identity.  


\end{proof}

\begin{prop}\label{prop:tree-1}
For $V=(v_1, \ldots, v_d)$ and $U=(u_1, \ldots, u_d)$ with $\{v_i\}_{1\le i\le d}$ and $\{u_i\}_{1\le i\le d}$ non decreasing sequences, set $UV=(u_1v_1, \ldots, u_dv_d)$. Then the following holds:
$$\{T_U\}\otimes \{T_V\}=\{T_{UV}\}.$$
\end{prop}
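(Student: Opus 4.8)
The plan is to reduce the claimed identity of extended trees to a single identity of forests and then settle that identity by a telescoping computation. Writing $T_U=\langle G_U\rangle$ and $T_V=\langle G_V\rangle$, Proposition \ref{prop:RestrictedTensorProduct}(iii) exhibits the tree hanging from the unique (length-one) cycle of $\{T_U\}\otimes\{T_V\}$ as $\langle G_U\rtp G_V\oplus G_U\rtp\{T_V\}\oplus\{T_U\}\rtp G_V\rangle$. Since $\{T_{UV}\}=\cyc(1,T_{UV})$ with $T_{UV}=\langle G_{UV}\rangle$, it suffices to prove the forest identity
$$G_U\rtp G_V\oplus G_U\rtp\{T_V\}\oplus\{T_U\}\rtp G_V=G_{UV}.$$
First I would record, from the recursive definition \eqref{TreeAssociatedEq}, that each of $G_U,G_V,G_{UV}$ is a disjoint union of partial trees: writing $G_U=\bigoplus_{m=0}^{d-1}c_m^U\times T_U^m$ with $c_m^U=u_{m+1}-u_{m+2}$ for $m\le d-2$ and $c_{d-1}^U=u_d-1$ (and analogously for $V$, and for $UV$ with $c_m^{UV}=u_{m+1}v_{m+1}-u_{m+2}v_{m+2}$ and $c_{d-1}^{UV}=u_dv_d-1$). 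It is convenient to adopt the padding convention $u_{d+1}=v_{d+1}=1$, so that $c_m^U=u_{m+1}-u_{m+2}$ holds uniformly for all $0\le m\le d-1$.

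Next I would compute the three restricted tensor products by distributing $\rtp$ over $\oplus$ and invoking Lemma \ref{lem:tree-1}. By part (i), $T_U^j\rtp T_V^k=T_{UV}^{\min\{j,k\}}$, so the first summand contributes $\bigoplus_{j,k}c_j^Uc_k^V\times T_{UV}^{\min\{j,k\}}$. By part (ii), $T_U^j\rtp\{T_V\}=T_{UV}^j$ and $\{T_U\}\rtp T_V^k=T_V^k\rtp\{T_U\}=T_{UV}^k$, so the remaining two summands contribute $\bigoplus_j c_j^U\times T_{UV}^j$ and $\bigoplus_k c_k^V\times T_{UV}^k$. Collecting terms, the multiplicity of $T_{UV}^m$ in the left-hand forest is
$$C_m=\sum_{\min\{j,k\}=m}c_j^Uc_k^V+c_m^U+c_m^V,$$
and the entire problem reduces to verifying $C_m=c_m^{UV}$ for every $0\le m\le d-1$.

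The bookkeeping is the heart of the argument. Introducing the partial sums $S_m^U=\sum_{j\ge m}c_j^U$, a telescoping collapse yields $S_m^U=u_{m+1}-1$ (and likewise $S_m^V=v_{m+1}-1$), with $S_d^U=0$ under the padding convention. Splitting the diagonal sum as $\sum_{\min\{j,k\}=m}c_j^Uc_k^V=S_m^US_m^V-S_{m+1}^US_{m+1}^V$, one obtains
$$C_m=(u_{m+1}-1)(v_{m+1}-1)-(u_{m+2}-1)(v_{m+2}-1)+(u_{m+1}-u_{m+2})+(v_{m+1}-v_{m+2}),$$
and a direct expansion cancels all the linear and constant contributions, leaving $C_m=u_{m+1}v_{m+1}-u_{m+2}v_{m+2}=c_m^{UV}$ (the top index $m=d-1$ being covered by the padding). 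This establishes the forest identity and hence $\{T_U\}\otimes\{T_V\}=\{T_{UV}\}$.

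I expect the main obstacle to be organizational rather than conceptual: getting the two boundary conventions right, namely the ``$-1$'' appearing in the top multiplicities $c_{d-1}$ (which encodes the length-one cycle) and the padding $u_{d+1}=v_{d+1}=1$, and correctly decomposing the $\min$-indexed double sum into a difference of products of partial sums. Once the telescoping $S_m^U=u_{m+1}-1$ is in place the cancellation in $C_m$ is immediate, so all the genuine work lies in the careful indexing of the partial trees.
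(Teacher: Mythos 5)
Your proposal is correct and follows essentially the same route as the paper: both reduce the claim via Proposition \ref{prop:RestrictedTensorProduct}(iii) to computing $G_U\rtp G_V$, $G_U\rtp\{T_V\}$ and $\{T_U\}\rtp G_V$ with Lemma \ref{lem:tree-1}(i)--(ii), and then match multiplicities of the partial trees $T_{UV}^m$ against $G_{UV}$. The only difference is cosmetic bookkeeping: you telescope partial sums $S_m^U=u_{m+1}-1$ to evaluate the $\min$-indexed double sum, while the paper expands and cancels the cross terms directly; both yield the same coefficients $u_{m+1}v_{m+1}-u_{m+2}v_{m+2}$.
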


\begin{proof}
By item (iii) of Proposition \ref{prop:RestrictedTensorProduct} we have 
\begin{equation}\label{eq:TuTv}
\{T_U\}\rtp \{T_V\}=\left\langle (G_{U} \rtp G_{V}) \oplus (G_{U} \rtp \{T_V\}) \oplus (\{T_{U}\} \rtp G_{V}) \right\rangle.
\end{equation}
Using item (i) of Lemma \ref{lem:tree-1} we compute
\begin{align}
G_{U} \rtp G_{v} &= \left( (u_d-1)\times T_U^{d-1}\oplus \bigoplus_{l=1}^{d-1}(u_l-u_{l+1})\times T_U^{l-1}  \right) \rtp \left( (v_d-1)\times T_V^{d-1}\oplus \bigoplus_{l=1}^{d-1}(v_l-v_{l+1})\times T_V^{l-1}  \right) \nonumber \\
&= (u_d-1)(v_d-1)\times T_{UV}^{d-1} \oplus \bigoplus_{k=1}^{d-1}\left[ (u_{k}-u_{k+1})(v_{k}-1) + (v_{k}-v_{k+1})(u_{k+1}-1) \right]\times T_{UV}^{k-1}. \label{eq:TuTvI}
\end{align}
Using item (ii) of Lemma \ref{lem:tree-1} we compute
\begin{align}
G_{U} \rtp \{T_V\} &=  (u_d-1)\times T_{UV}^{d-1} \oplus \bigoplus_{k=1}^{d-1} (u_{k}-u_{k+1})\times T_{UV}^{k-1}; \label{eq:TuTvII} \\
\{T_{U}\} \rtp G_{V} &=  (v_d-1)\times T_{UV}^{d-1} \oplus \bigoplus_{k=1}^{d-1} (v_{k}-v_{k+1})\times T_{UV}^{k-1} \label{eq:TuTvIII}.
\end{align}
Substituting Equations (\ref{eq:TuTvI}), (\ref{eq:TuTvII}) and (\ref{eq:TuTvIII}) into Equation (\ref{eq:TuTv}) we obtain the desired result.

\end{proof}

\begin{cor}\label{cor:tree-1}
Let $\D$ be a residually finite Dedekind domain, $a\in \D$ be a nonzero element and $\ideal{q},\ideal{q}'$ be nonzero ideals such that $\an{a} \subseteq \mathrm{rad}(\ideal{q})$, $\an{a} \subseteq \mathrm{rad}(\ideal{q}')$ and $\gcd(\ideal{q}, \ideal{q}')=1$. Then, 
$$  \{T_{\ideal{q}(a)}\} \otimes \{T_{\ideal{q}'(a)}\} = \{ T_{\ideal{q}\ideal{q}'(a)} \}.  $$
\end{cor}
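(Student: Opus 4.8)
The plan is to deduce the corollary by combining the purely combinatorial identity of Proposition~\ref{prop:tree-1} with the arithmetic description of the $\nu$-series of a product given in Lemma~\ref{lem:vseriesproduct}; the norm acts as the dictionary translating the ideal-theoretic statement into the sequence-theoretic one. First I would write the two $\nu$-series explicitly as $\ideal{q}(a)=(N(\ideal{a}_1),\ldots,N(\ideal{a}_d))$ and $\ideal{q}'(a)=(N(\ideal{a}'_1),\ldots,N(\ideal{a}'_{d'}))$ with $d'\le d$, exactly as in the hypotheses of Lemma~\ref{lem:vseriesproduct}. Since Proposition~\ref{prop:tree-1} is stated for two sequences of the \emph{same} length, the preliminary step is to pad the shorter series $\ideal{q}'(a)$ with trailing $1$'s, obtaining $V=(N(\ideal{a}'_1),\ldots,N(\ideal{a}'_{d'}),1,\ldots,1)$ of length $d$. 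By the observation recorded after \eqref{TreeAssociatedEq} (that $T_V=T_W$ whenever $W$ extends $V$ by trailing $1$'s), this padding leaves the associated tree unchanged, so $\{T_V\}=\{T_{\ideal{q}'(a)}\}$.

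Next, setting $U=\ideal{q}(a)$, I would form the termwise product $UV=(u_1v_1,\ldots,u_dv_d)$ and identify it with the $\nu$-series $\ideal{q}\ideal{q}'(a)$. This is where Lemma~\ref{lem:vseriesproduct} and the complete multiplicativity of the norm enter: the lemma gives $\ideal{q}\ideal{q}'(a)=(N(\ideal{b}_1),\ldots,N(\ideal{b}_d))$ with $\ideal{b}_i=\ideal{a}_i\ideal{a}'_i$ for $1\le i\le d'$ and $\ideal{b}_i=\ideal{a}_i$ for $d'<i\le d$. Applying $N(\ideal{a}_i\ideal{a}'_i)=N(\ideal{a}_i)N(\ideal{a}'_i)$ on the first range, and noting that each padded entry contributes $N(\ideal{a}_i)\cdot 1=N(\ideal{a}_i)$ on the second range, shows that the $i$-th entry of $UV$ equals $N(\ideal{b}_i)$ throughout. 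Hence $UV=\ideal{q}\ideal{q}'(a)$ as sequences, and consequently $\{T_{UV}\}=\{T_{\ideal{q}\ideal{q}'(a)}\}$.

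Finally I would invoke Proposition~\ref{prop:tree-1} to obtain $\{T_U\}\otimes\{T_V\}=\{T_{UV}\}$, and substitute the three identifications above to read off the claimed $\{T_{\ideal{q}(a)}\}\otimes\{T_{\ideal{q}'(a)}\}=\{T_{\ideal{q}\ideal{q}'(a)}\}$. I expect no genuine obstacle, since the corollary is essentially a transport of a statement about sequences to one about ideals via the norm; the only points requiring care are the length-matching step and verifying that $U$, the padded $V$, and their product $UV$ are admissible inputs for Proposition~\ref{prop:tree-1}. The latter holds because each $\nu$-series is non-increasing — prime by prime, the exponent extracted by each successive $\gcd$ with $\an{a}$ cannot increase — and the termwise product of non-increasing sequences is again non-increasing, so all three sequences have the form required for the elementary trees $T_U$, $T_V$, $T_{UV}$ to be defined.
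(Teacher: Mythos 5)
Your proposal is correct and follows essentially the same route as the paper's own proof: pad $\ideal{q}'(a)$ with trailing $1$'s (using the observation that trailing $1$'s do not change the elementary tree), identify the termwise product $UV$ with $\ideal{q}\ideal{q}'(a)$ via Lemma~\ref{lem:vseriesproduct} and multiplicativity of the norm, and conclude by Proposition~\ref{prop:tree-1}. Your additional check that all three sequences are non-increasing (hence admissible inputs for the elementary-tree construction) is a detail the paper leaves implicit, but it is not a different argument.
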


\begin{proof}
Let $\ideal{q}(a):=(N(\ideal{a}_1),\ldots, N(\ideal{a}_d))$ and $\ideal{q}'(a):=(N(\ideal{a}'_1),\ldots, N(\ideal{a}'_{d'}))$. Without loss of generality we can suppose $d'\leq d$. We consider the $d$-terms sequences $U=\ideal{q}(a)$ and $V=(N(\ideal{a}'_1),\ldots, N(\ideal{a}'_{d'}), 1,  \cdots, 1)$. By Lemma \ref{lem:vseriesproduct}, $UV=\ideal{q}\ideal{q}'(a)$. We conclude noting that $T_{V}=T_{\ideal{q'}(a)}$ and using Proposition \ref{prop:tree-1}. 
\end{proof}

\section{Proof of the main result}\label{sect:main}

We consider here a residually finite Dedekind domain $\D$, a nonzero element $a\in \D$, a nonzero ideal $\n \unlhd \D$ and the $a$-map $\amap{\n}$ defined as in Equation (\ref{def:GammaMap}). Let $\n = \n_0\n_1$ be the $a$-decomposition of the ideal $\n$ (i.e. $\an{a}\subseteq \n_0$ and $\gcd(\an{a},\n_1)=1$). If $y \in \D/\n$ is a periodic point of $\amap{\n}$ we denote by $c_{a, \n}(y)$ its period, that is, the least positive integer $i$ such that $\Gamma_{a, \n}^{(i)}(y)=y$, where $f^{(n)}$ denotes the composition of $f$ with itself $n$ times.

\subsection{The case $\gcd(\an{a},\n)=1$}

In this case there is $a' \in \D$ such that $aa'\equiv 1 \pmod{\n}$ and the map $\amap{\n}$ is invertible (with inverse $\Gamma_{a',\n}$). Then, in this case every point is periodic and the graph $\mathcal{G}(\amap{\n})$ is a disjoint union of cycles. The next result brings an explicit description of the graph $\mathcal{G}(\amap{\n})$.

\begin{prop}\label{prop:cyclic-case}
Let $\D$ be a residually finite Dedekind Domain, $\n\in \D$ be a nonzero ideal and $a\in \D$ be an element such that $\an{a}+\n=\D$. For each $y=b+\n \in \D/\n$, the following statements hold.

\begin{enumerate}[(i)]
\item If $i$ is a positive integer, $(a^i-1) b\in \n$ if and only if $\Psi_{\n}(a)^i y = y$. In particular, $c_{a, \n}(y)=\ord{a}{\n'}$ where $\n'=\frac{\n}{\gcd(\an{b}, \n)}$.
\item If $b_0 \in \D$ is such that $\Gamma_{a,\n}(y)=b_0+\n$, then $\gcd(\an{b_0}, \n)=\gcd(\an{b}, \n)$.
\end{enumerate}
In particular, the cycle decomposition of the map $\Gamma_{a, \n}$ over $\D/\n$ is given as follows
$$\bigoplus_{\m | \n}\frac{\varphi_{\D}(\m)}{\ord{a}{\m}}\times \cyc(\ord{a}{\m}, \bullet),$$
where the sum is over the distinct ideals $\m$ of $\D$ such that $\m$ divides $\n$.
\end{prop}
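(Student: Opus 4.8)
The plan is to exploit that the hypothesis $\an{a}+\n=\D$ makes $\Psi_{\n}(a)$ a unit of $\D/\n$, so $\amap{\n}$ is a bijection and $\mathcal{G}(\amap{\n})$ is a disjoint union of cycles; every point is periodic and $c_{a,\n}(y)$ is exactly the length of the cycle through $y$. For part (i), since $\amap{\n}^{(i)}(y)=\Psi_{\n}(a)^i y$ and $\Psi_{\n}$ is a ring homomorphism, the identity $\Psi_{\n}(a)^i y=y$ reads $\Psi_{\n}(a^i b)=\Psi_{\n}(b)$, i.e. $(a^i-1)b\in\n$, which is the first equivalence. To identify the period I would set $\m=\gcd(\an{b},\n)$ and write $\an{b}=\m\ideal{c}$ and $\n=\m\n'$ with $\n'=\n/\m$; comparing $\ppp$-adic valuations shows $\gcd(\ideal{c},\n')=1$. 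Then $(a^i-1)b\in\n$ means $\m\n'\mid \an{a^i-1}\,\m\,\ideal{c}$, and cancelling the common factor $\m$ in the unique prime factorization reduces this to $\n'\mid \an{a^i-1}\,\ideal{c}$; coprimality of $\ideal{c}$ and $\n'$ then gives the equivalent condition $\n'\mid\an{a^i-1}$, that is $a^i\equiv 1\pmod{\n'}$. Hence the least such $i$ is $\ord{a}{\n'}$, which is well defined because $\n\subseteq\n'$ forces $\an{a}+\n'=\D$.

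For part (ii), writing $b_0\equiv ab\pmod{\n}$ I reduce the claim to $\gcd(\an{b_0},\n)=\gcd(\an{ab},\n)=\gcd(\an{b},\n)$. The first equality holds because $b_0-ab\in\n$ yields $\an{b_0}+\n=\an{ab}+\n$. For the second, $\an{ab}\subseteq\an{b}$ gives one inclusion, and choosing $a'\in\D$ with $a'a\equiv 1\pmod{\n}$ gives $b\equiv a'(ab)\pmod{\n}$, whence $\an{b}\subseteq\an{ab}+\n$ and the reverse inclusion follows.

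To assemble the cycle decomposition I would partition $\D/\n$ into the sets $S_{\m}=\{b+\n:\gcd(\an{b},\n)=\m\}$ indexed by the ideals $\m\mid\n$. Part (ii) shows each $S_{\m}$ is invariant under $\amap{\n}$, hence a union of cycles; part (i) shows every $y\in S_{\m}$ has period $\ord{a}{\n/\m}$, so all cycles inside $S_{\m}$ have this common length; and Lemma~\ref{lem:phi-1} gives $|S_{\m}|=\varphi_{\D}(\n/\m)$. Therefore $S_{\m}$ decomposes into $\varphi_{\D}(\n/\m)/\ord{a}{\n/\m}$ cycles of length $\ord{a}{\n/\m}$, the quotient being an integer since $\ord{a}{\n/\m}\mid\varphi_{\D}(\n/\m)$ by Lagrange. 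Summing over $\m\mid\n$ and reindexing by $\ideal{d}=\n/\m$, which also ranges over all divisors of $\n$, yields $\bigoplus_{\ideal{d}\mid\n}\frac{\varphi_{\D}(\ideal{d})}{\ord{a}{\ideal{d}}}\times\cyc(\ord{a}{\ideal{d}},\bullet)$, the claimed formula.

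The one genuinely delicate step is the reduction in part (i): translating the membership $(a^i-1)b\in\n$ into a congruence modulo the complementary ideal $\n/\m$ (rather than modulo $\m$), which requires cancelling the common factor $\m$ in the prime factorization together with the coprimality $\gcd(\ideal{c},\n')=1$. Everything else is bookkeeping: the cycle structure from invertibility, the invariance of the gcd, the counting via Lemma~\ref{lem:phi-1}, and the final reindexing $\m\leftrightarrow\n/\m$ that converts $\ord{a}{\n/\m}$ and $\varphi_{\D}(\n/\m)$ into the stated $\ord{a}{\ideal{d}}$ and $\varphi_{\D}(\ideal{d})$.
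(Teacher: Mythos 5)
Your proposal is correct and follows essentially the same route as the paper: the chain of equivalences reducing $(a^i-1)b\in\n$ to $a^i\equiv 1\pmod{\n'}$, the $\gcd$-invariance in part (ii) via coprimality of $\an{a}$ and $\n$, and the partition of $\D/\n$ by the value of $\gcd(\an{b},\n)$ counted with Lemma~\ref{lem:phi-1}. The only differences are cosmetic: you index the partition by $\m=\gcd(\an{b},\n)$ and reindex by $\n/\m$ at the end (the paper indexes by the complementary divisor from the start), and you spell out the ideal-cancellation argument behind the congruence reduction that the paper states without detail.
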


\begin{proof}
We split the proof into cases.

\begin{enumerate}[(i)]
\item We have the following chain of equivalences:  $(a^i-1)b\in\n \Leftrightarrow a^i b \equiv b \pmod{\n} \Leftrightarrow \Psi_{\n}(a^i b) = \Psi_{\n}(b) \Leftrightarrow \Psi_{\n}(a)^i y = y \Leftrightarrow  \Gamma_{a,\n}^{(i)}(y)=y$ and also $a^i b \equiv b \pmod{\n} \Leftrightarrow a^i \equiv 1 \pmod{\n'}$. 
\item Since $\Gamma_{a,\n}(y)=ab+\n$ we have $ab\equiv b_0 \pmod{\n}$ and consequently $\gcd(\langle ab \rangle,\n)= \gcd(\langle b_0 \rangle, \n)$. On the other hand $\gcd(\langle ab \rangle,\n) = \gcd(\langle a \rangle \langle b \rangle,\n)= \gcd(\langle b \rangle, \n)$ because $\gcd(\langle a \rangle, \n)=1$.
\end{enumerate}
For each ideal $\m$ dividing $\n$, let $C_{\m}$ be a complete set of incongruent elements $b\in \D$ modulo $\n$ such that $\gcd(\an{b}, \n)=\frac{\n}{\m}$. In particular, $\D/\n$ equals the disjoint union of $\Psi_{\n}(C_{\m})$ with $\m \mid \n$. By item (ii), each set $\Psi_{\n}(C_{\m})$ is $\amap{\n}$-invariant, i.e. $\mathcal{G}(\amap{\n}) = \bigoplus_{\m \mid \n} \mathcal{G}(\amap{\n}/\Psi_{\n}(C_{\m}))$. From Lemma~\ref{lem:phi-1}, $\#\Psi_{\n}(C_{\m})= \#C_{\m} = \varphi_{\D}(\m)$, and by item (i), for each $b\in C_m$, the element $\Psi_{\n}(b)$ belongs to a cycle of length $c_{a, \n}(\Psi_{\n}(b))=\ord{a}{\m}$. Therefore, the restriction of $\Gamma_{a,\n}$ to $\Psi_{\n}(C_{\m})$ splits into $\frac{\varphi_{\D}(\m)}{\ord{a}{ \m}}$ cycles, each of length $\ord{a}{\m}$.
\end{proof}


\subsection{The case $\n=\ideal{p}^{\alpha}$ with  $\ideal{p} \mid \an{a}$}

When $\nu_{\ideal{p}}(a) \geq \alpha$ the dynamics of the map $\amap{\pa}$ is trivial (everyone goes to zero), so we focus on the situation when $\an{a}=\ideal{p}^{\beta}\cdot \ideal{b},$ for some ideal $\ideal{b}$ such that $\gcd(\ideal{b}, \ideal{p})=1$ and $\beta=\nu_{\ideal{p}}(a)<\alpha$ (in particular $\alpha\ge 2$). 
\begin{lemma}\label{lem:trees}
Write $d=\left\lfloor\frac{\alpha}{\beta}\right\rfloor$, $e=\alpha-d\beta<\beta$ and let $\m$ be the homomorphic image of $\ppp$ by $\Psi_{\pa}$. Then the following hold.
\begin{enumerate}[(i)]
\item The element $0\in \D/\n$ has exactly $\N_{\D}(\pb)$ preimages by $\amap{\pa}$, one being the element $0$ itself. In addition, the set of the other preimages equals the union of the sets $C_1=\m^{\alpha-\beta}\setminus \m^{\alpha-e}$ and $C_2=\m^{\alpha-e}\setminus\{0\}$.
\item Let $b\in \D$ such that  $\Psi_{\pa}(b)=y$ is nonzero. Then $y$ has preimages by $\amap{\pa}$ if and only if $\nu_{\ppp}(b)\ge \beta$. In this case, $y$ has exactly $\N_{\D}(\pb)$ preimages by $\amap{\pa}$ and, for any preimage $z\in \D/\n$ of $y$ by $\amap{\pa}$ and any $b_0\in \D$ such that $\Psi_{\pa}(b_0)=z$, we have that $\nu_{\ppp}(b_0)=\nu_{\ppp}(b)-\beta$.
\end{enumerate}
\end{lemma}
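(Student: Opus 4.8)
The plan is to translate everything into linear congruences via Lemma~\ref{lem:linearcong} and then read off valuations using Lemma~\ref{lem:aux-1}. Fix representatives: a point $z = \Psi_{\pa}(x)$ is a preimage of $y = \Psi_{\pa}(b)$ under $\amap{\pa}$ precisely when $\Psi_{\pa}(ax) = \Psi_{\pa}(b)$, i.e.\ when $x$ solves the congruence $ax \equiv b \pmod{\pa}$; moreover incongruent solutions $x$ correspond bijectively to distinct preimages $z$. First I would record the ``universal'' count. Since $\beta = \nu_{\ppp}(a) < \alpha$ and $\an{a} = \pb\ideal{b}$ with $\gcd(\ideal{b},\ppp)=1$, the exponent of $\ppp$ in $\gcd(\an{a}, \pa) = \an{a}+\pa$ is $\min\{\beta,\alpha\}=\beta$, so $\an{a}+\pa = \pb$ and hence, by Lemma~\ref{lem:linearcong}, whenever the congruence is solvable it has exactly $N(\an{a}+\pa) = \N_{\D}(\pb)$ incongruent solutions. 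This already gives the stated number of preimages in both (i) and (ii) once solvability is established.

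For part (ii), Lemma~\ref{lem:linearcong} says the congruence $ax\equiv b\pmod{\pa}$ is solvable iff $b \in \an{a}+\pa = \pb$, which (as $\Psi_{\pa}(b)=y\neq 0$, whence $\nu_{\ppp}(b)<\alpha$) is equivalent to $\nu_{\ppp}(b)\ge\beta$ by Lemma~\ref{lem:aux-1}(ii); this is exactly the claimed criterion, and the count above gives $\N_{\D}(\pb)$ preimages. For the valuation statement, let $z = \Psi_{\pa}(b_0)$ be a preimage, so $ab_0 \equiv b \pmod{\pa}$, i.e.\ $ab_0 - b \in \ppp^{\alpha}$. Using additivity of $\nu_{\ppp}$ we have $\nu_{\ppp}(ab_0) = \beta + \nu_{\ppp}(b_0)$, while the ultrametric property forces $\nu_{\ppp}(ab_0) = \nu_{\ppp}(b)$: indeed $ab_0 = b + c$ with $\nu_{\ppp}(c) \ge \alpha > \nu_{\ppp}(b)$, and a strict inequality in the ultrametric inequality yields the equality $\nu_{\ppp}(b+c) = \nu_{\ppp}(b)$. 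Combining these gives $\nu_{\ppp}(b_0) = \nu_{\ppp}(b) - \beta$, as required.

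For part (i), a point $z = \Psi_{\pa}(x)$ is a preimage of $0$ iff $ax \equiv 0 \pmod{\pa}$, i.e.\ $ax \in \ppp^{\alpha}$; since $\nu_{\ppp}(ax) = \beta + \nu_{\ppp}(x)$, this holds iff $\nu_{\ppp}(x) \ge \alpha - \beta$, that is (by Lemma~\ref{lem:aux-1}(ii)) iff $z \in \m^{\alpha-\beta}$. Thus the preimage set of $0$ is exactly $\m^{\alpha-\beta}$, which contains $0$ and has cardinality $|\m^{\alpha-\beta}| = \N_{\D}(\ppp^{\alpha-(\alpha-\beta)}) = \N_{\D}(\pb)$ by Lemma~\ref{lem:aux-1}(iii). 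Finally, because $e < \beta$ we have $\alpha - e > \alpha - \beta$ and hence $\m^{\alpha-e} \subseteq \m^{\alpha-\beta}$, so the nonzero preimages $\m^{\alpha-\beta}\setminus\{0\}$ split as the disjoint union $C_1 \cup C_2$ with $C_1 = \m^{\alpha-\beta}\setminus \m^{\alpha-e}$ and $C_2 = \m^{\alpha-e}\setminus\{0\}$, which is the asserted description. The only genuinely delicate points are the identification $\an{a}+\pa = \pb$ (where the hypothesis $\beta<\alpha$ is essential) and the clean passage between the abstract solution-set furnished by Lemma~\ref{lem:linearcong} and the valuation/filtration language of Lemma~\ref{lem:aux-1}; everything else is bookkeeping.
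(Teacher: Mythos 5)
Your proof is correct and follows essentially the same route as the paper's: both reduce preimages to the linear congruence $ax\equiv b\pmod{\pa}$, use Lemma~\ref{lem:linearcong} with the identification $\an{a}+\pa=\pb$ to get the count $\N_{\D}(\pb)$, and use Lemma~\ref{lem:aux-1} to translate between $\nu_{\ppp}$-valuations and the filtration $\m^i$. The only differences are presentational: you make the ultrametric step $\nu_{\ppp}(ab_0)=\nu_{\ppp}(b)$ explicit and characterize the preimage set of $0$ as exactly $\m^{\alpha-\beta}$ by a two-way argument, where the paper states the containment and lets the cardinality count close the gap.
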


\begin{proof}
We split the proof into cases.

\begin{enumerate}[(i)]
\item Clearly, $\amap{\pa}(0)=0$. We observe that the number of preimages of $0$ by $\amap{\pa}$ equals the number of incongruent solutions modulo $\n$ for the linear congruence
$$ax\equiv 0\pmod {\n}.$$
From Lemma~\ref{lem:linearcong}, this number equals $\N_{\D}(\pb)$. In addition, any nonzero element $y\in \D/\n$ with $\amap{\pa}(y)=0$ satisfies $\Psi_{\pa}(a)y=0$, i.e., $\Psi_{\pa}(a)y\in \m^{\alpha}$. From Lemma~\ref{lem:aux-1}, $\Psi_{\pa}(a)\in \m^{\beta}\setminus \m^{\beta+1}$ and, since $\m$ is principal, we have that $y\in  \m^{\alpha-\beta}\setminus\{0\}=C_1\cup C_2$.

\item We observe that $y$ has preimages by $\amap{\pa}$ if and only if the linear congruence $ax\equiv b\pmod {\pa}$ has solution. From Lemma~\ref{lem:linearcong}, the latter is equivalent to $b\in \an{a}+\pa$. Since $\nu_{\ppp}(a)=\beta<\alpha$, $\an{a}+\pa=\pb$ and so $b\in \an{a}+\pa$ if and only if $\nu_{\ppp}(b)\ge \beta$. If the latter occurs, Lemma~\ref{lem:linearcong} entails that the linear congruence $ax\equiv b\pmod {\pa}$ has $\N_{\D}(\pb)$ incongruent solutions modulo $\pa$, i.e., $y$ has $\N_{\D}(\pb)$ preimages by $\amap{\pa}$. In addition, if $z=\Psi_{\pa}(b_0)$ is any preimage of $y$ by $\amap{\pa}$, we have that $ab_0\equiv b\pmod{\pa}$. Since $y$ is nonzero, $\nu_{\ppp}(b)<\alpha$ and so $\nu_{\ppp}(ab_0)=\nu_{\ppp}(b)$. Therefore, $\nu_{\ppp}(b_0)=\nu_{\ppp}(b)-\nu_{\ppp}(a)=\nu_{\ppp}(b)-\beta$.
\end{enumerate}
\end{proof}

From Lemma~\ref{lem:trees}, we can describe the dynamics of the map $\amap{\pa}$.


\begin{prop}\label{prop:tree-primary}
Let $\ideal{q}=\ideal{p}^{\alpha}$ and $a$ be a nonzero element of $\D$ such that $\ideal{p}\mid \an{a}$. Then,
$$ \mathcal{G}(\amap{q}) = \{ T_{\ideal{q}(a)}  \}.$$
\end{prop}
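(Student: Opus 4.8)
The plan is to show that $0$ is the unique periodic point of $\amap{\ideal q}$ and that every vertex is eventually mapped onto it, so that $\mathcal{G}(\amap{\ideal q})$ is an extended rooted tree $\{T\}$ whose root is $0$; the real content is then to identify $T$ with the elementary tree $T_{\ideal q(a)}$. Write $\beta=\nu_{\ppp}(a)$ and $N:=\N_{\D}(\pb)$; the excerpt already disposes of $\beta\ge\alpha$ (everyone maps to $0$, matching the one-term $\nu$-series), so I focus on $\beta<\alpha$. By Lemma~\ref{lem:trees}(ii), applying $\amap{\ideal q}$ once raises the level $\nu_{\ppp}$ of any representative of a nonzero vertex by exactly $\beta$, until the level reaches $\alpha$, i.e. until the vertex becomes $0$. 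Hence every vertex flows into the fixed point $0$ and $\mathcal{G}(\amap{\ideal q})=\{T\}$, with $T$ the in-tree of $0$.

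First I would record the shape of $T$ level by level. For a nonzero $y$ of level $j$ (that is, $\nu_{\ppp}(b)=j$ for some $\Psi_{\ideal q}(b)=y$, equivalently $y\in\m^{j}\setminus\m^{j+1}$), Lemma~\ref{lem:trees}(ii) says that $y$ is a leaf when $j<\beta$ and that, when $j\ge\beta$, it has exactly $N$ preimages, all of level $j-\beta$. Since this description depends only on $j$, all vertices of a given level have isomorphic hanging trees $S_j$, and these satisfy the recursion $S_j=\langle N\times S_{j-\beta}\rangle$ for $j\ge\beta$, with base case $S_j=\bullet$ for $0\le j<\beta$.

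Next I would match this against the elementary-tree recursion. A direct computation of the $\nu$-series gives $\ideal q(a)=(\underbrace{N,\dots,N}_{d},\N_{\D}(\ppp^{e}))$ when $e>0$ and $(\underbrace{N,\dots,N}_{d})$ when $e=0$, where $d=\fl{\alpha/\beta}$ and $e=\alpha-d\beta$; in particular its first $d$ entries all equal $N$. Consequently all the difference terms $\nu_i-\nu_{i+1}$ among these indices vanish, so every partial tree $T^{k}_{\ideal q(a)}$ that the definition \eqref{TreeAssociatedEq} produces (namely $1\le k<$ the length of $\ideal q(a)$, hence $k\le d$) collapses to $T^{k}_{\ideal q(a)}=\langle N\times T^{k-1}_{\ideal q(a)}\rangle$. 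Comparing this with the recursion for $S_j$ and the common base case $T^{0}_{\ideal q(a)}=\bullet=S_j$ for $j<\beta$, an induction yields $S_j=T^{\fl{j/\beta}}_{\ideal q(a)}$ for every $0\le j\le\alpha-1$ (note $\fl{j/\beta}$ never exceeds $d$ in this range, so the index is always defined).

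Finally I would assemble the root $0$. By Lemma~\ref{lem:trees}(i) the nonzero preimages of $0$ are exactly the vertices of levels $\alpha-\beta,\dots,\alpha-1$, and by Lemma~\ref{lem:phi-1} there are $\varphi_{\D}(\ppp^{m})$ of them of level $\alpha-m$ for each $1\le m\le\beta$. Sorting these children by $\fl{(\alpha-m)/\beta}\in\{d-1,d\}$ and telescoping the sums $\sum\varphi_{\D}(\ppp^{m})$ via Eq.~\eqref{eq:norm-phi}, I expect to find that $0$ has $\N_{\D}(\ppp^{e})-1$ children rooting $T^{d}_{\ideal q(a)}$ and $N-\N_{\D}(\ppp^{e})$ children rooting $T^{d-1}_{\ideal q(a)}$ (the first group being empty when $e=0$). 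This forest is precisely $G_{\ideal q(a)}$, whence $T=\langle G_{\ideal q(a)}\rangle=T_{\ideal q(a)}$ and $\mathcal{G}(\amap{\ideal q})=\{T_{\ideal q(a)}\}$. I expect the main obstacle to be exactly this last bookkeeping: verifying that the level-multiplicities $\varphi_{\D}(\ppp^{m})$ telescope into the coefficients $\nu_L-1$ and $\nu_i-\nu_{i+1}$ of $G_{\ideal q(a)}$ and that the $e=0$ and $e>0$ cases are handled uniformly.
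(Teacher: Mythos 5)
Your proposal is correct and follows essentially the same route as the paper's proof: identify $0$ as the unique periodic point, show via Lemma~\ref{lem:trees}(ii) that a nonzero vertex of level $j$ roots a complete $\N_{\D}(\ppp^{\beta})$-ary tree isomorphic to the partial tree $T^{\fl{j/\beta}}_{\ideal{q}(a)}$, and then assemble the children of $0$ (sorted by height $d-1$ or $d$) into the forest $G_{\ideal{q}(a)}$. The only cosmetic difference is in the final bookkeeping, where you count children per level by Lemma~\ref{lem:phi-1} and telescope the sums $\sum\varphi_{\D}(\ppp^{m})$, whereas the paper reads off the same two cardinalities $\N_{\D}(\ppp^{\beta})-\N_{\D}(\ppp^{e})$ and $\N_{\D}(\ppp^{e})-1$ directly from $|\m^{\alpha-\beta}\setminus\m^{\alpha-e}|$ and $|\m^{\alpha-e}\setminus\{0\}|$ using Lemma~\ref{lem:aux-1}.
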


\begin{proof}
Let $\nu_{\ideal{p}}(a)=\beta$. First note that if $\alpha \leq \beta$ then $\ideal{q}(a)=(\N(\ideal{q}))$ where $\N(\ideal{q})= \# \D/ \ideal{q}$. Thus $T_{\ideal{q}(a)}= \langle (\N(\ideal{q})-1)\times \bullet \rangle$, that is, the tree consisting of one root and $\N(\ideal{q})-1$ predecessors. Since in this case everyone goes to zero by $\amap{\ideal{q}}: \D/\ideal{q} \rightarrow \D/ \ideal{q}$, their functional graph consist of a loop at $0$ with $\N(\ideal{q})-1$ predecessors and then $\mathcal{G}(\amap{q}) = \{ T_{\ideal{q}(a)}  \}$. Now, we assume $\alpha > \beta$ and write $\alpha = d\beta +e $ with $0\leq e <\beta$ and $d\geq 1$. In this case $$\ideal{q}(a)=(\underbrace{\N_{\D}(\ppp^{\beta}), \ldots, \N_{\D}(\ppp^{\beta})}_{d\mathrm{\; times}},\N_{\D}(\ppp^{e})).$$ From item (ii) of Lemma~\ref{lem:trees}, any nonzero preimage $y\in \D/\pa$ of the element $0$ such that $y=\Psi_{\pa}(b)$ and $\nu_{\ppp}(b)=t$ is the root of a $\N_{\D}(\ppp^{\beta})$-ary complete tree of height $k=\left\lfloor\frac{t}{\beta}\right\rfloor$, and therefore isomorphic to $T_{\ideal{q}(a)}^{k}$. We observe that $\left\lfloor\frac{s}{\beta}\right\rfloor=d$ and $\left\lfloor\frac{s}{\beta}\right\rfloor=d-1$ according to $\alpha-\beta\le s< \alpha-e$ and $\alpha-e\le s\le \alpha$, respectively. In particular, from item (i) of Lemma~\ref{lem:trees} and item (ii) of Lemma~\ref{lem:aux-1}, $|\m^{\alpha-\beta}\setminus \m^{\alpha-e}|$ preimages of $0$ have a hanging tree isomorphic to $T_{\ideal{q}(a)}^{d-1}$ and the remaining $|\m^{\alpha-e}\setminus\{0\}|$ nonzero preimages of $0$ have a hanging tree isomorphic to $T_{\ideal{q}(a)}^{d}$. From Lemma~\ref{lem:aux-1}, $|\m^{\alpha-\beta}\setminus \m^{\alpha-e}|=\N_{\D}(\ppp^{\beta})-\N_{\D}(\ppp^{e})$ and $|\m^{\alpha-e}\setminus\{0\}|=\N_{\D}(\ppp^{e})-1$. In other words, we proved that the hanging tree of $0$ is isomorphic to $\langle (\N_{\D}(\ppp^{e})-1)\times T_{\ideal{q}(a)}^{d} \oplus (\N_{\D}(\ppp^{\beta})-\N_{\D}(\ppp^{e}))\times T_{\ideal{q}(a)}^{d-1} \rangle = T_{\ideal{q}(a)}$ (by the recursive definition of $T_{\ideal{q}(a)}$) and since $0$ is the only periodic point of $\amap{\ideal{q}}$ we conclude that $\mathcal{G}(\amap{q}) = \{ T_{\ideal{q}(a)}  \}$.
\end{proof}

\subsection{The general case}

Here we use the description obtained in the previous subsections to obtain the description of $\mathcal{G}(\amap{\n})$ for the general case. First we state some lemmas whose proofs are straightforward. In \cite{T05} it is considered the following product of maps of finite sets: for $f:X\to X$ and $g:Y\to Y$, their product is defined as the map $f\times g:X\times Y\to X\times Y$ given by $(x, y)\mapsto (f(x), g(y))$.

\begin{lemma}\label{lem:FDS}
For any maps of finite sets $f:X\to X$ and $g:Y\to Y$ we have the following graph isomorphism:
$$\G(f\times g/ X\times Y) \cong \G(f/X)\otimes \G(g/Y).$$
\end{lemma}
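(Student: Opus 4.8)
The plan is to prove Lemma \ref{lem:FDS} directly from the definitions of the functional graph, the product of maps, and the tensor product of directed graphs, by exhibiting an explicit vertex bijection and checking that edges correspond. The natural candidate is the identity map on the underlying vertex set $V = X \times Y$, so the real content is entirely an edge-correspondence verification; there is no combinatorial obstacle here, just a careful unwinding of three definitions.

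\medskip

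First I would fix notation. The functional graph $\G(f \times g / X \times Y)$ has vertex set $X \times Y$ and, by definition of the functional graph of a map, an edge from each vertex to its image under $f \times g$; that is, its edges are exactly the pairs $\bigl((x,y), (f\times g)(x,y)\bigr) = \bigl((x,y),(f(x),g(y))\bigr)$ for $(x,y) \in X \times Y$. On the other side, the tensor product $\G(f/X) \otimes \G(g/Y)$ has the same vertex set $X \times Y$, and by the definition of $\otimes$ recalled in Subsection 2.3, there is an edge $(x_1,y_1) \to (x_2,y_2)$ if and only if $x_1 \to x_2$ is an edge of $\G(f/X)$ and $y_1 \to y_2$ is an edge of $\G(g/Y)$.

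\medskip

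The key step is then the following chain of equivalences, which I would present as the core of the argument. An edge $x_1 \to x_2$ in $\G(f/X)$ exists if and only if $x_2 = f(x_1)$, since $\G(f/X)$ is a functional graph in which every vertex has out-degree exactly one; likewise $y_1 \to y_2$ is an edge of $\G(g/Y)$ if and only if $y_2 = g(y_1)$. Hence $(x_1,y_1) \to (x_2,y_2)$ is an edge of the tensor product if and only if $x_2 = f(x_1)$ and $y_2 = g(y_1)$, which holds if and only if $(x_2,y_2) = (f(x_1),g(y_1)) = (f\times g)(x_1,y_1)$, i.e.\ if and only if $(x_1,y_1) \to (x_2,y_2)$ is an edge of $\G(f\times g / X \times Y)$. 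Since both graphs share the vertex set $X \times Y$ and have identical edge sets under the identity map on vertices, the identity is a graph isomorphism, establishing $\G(f\times g/X\times Y) \cong \G(f/X)\otimes \G(g/Y)$.

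\medskip

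There is no genuine obstacle to overcome: the statement is flagged in the text as having a straightforward proof, and the only subtlety worth noting explicitly is that the equivalence in the key step relies on every vertex of a functional graph having out-degree exactly one, which is precisely what lets the ``$x_1 \to x_2$ is an edge'' condition collapse to the single equation $x_2 = f(x_1)$. I would make sure to state this functionality remark, since it is what makes the tensor-product edge condition (a conjunction of two edge conditions) coincide with the single edge condition of the product map, rather than producing extra or missing edges.
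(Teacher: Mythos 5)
Your proof is correct and is exactly the straightforward verification the paper has in mind (the paper states this lemma without proof, noting its proof is straightforward): the identity on $X\times Y$ is an isomorphism because the tensor-product edge condition collapses, by the out-degree-one property of functional graphs, to the single equation defining the edges of $\G(f\times g/X\times Y)$. No gaps; nothing further is needed.
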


\begin{lemma}\label{lem:FDS-2}
Let $r\in \mathbb{Z}^{+}$ and $T$ be a rooted tree. The following isomorphism holds: $$\mathrm{Cyc}(r,\bullet)\otimes \{T\} \cong \mathrm{Cyc}(r,T). $$
\end{lemma}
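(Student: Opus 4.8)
The plan is to make both tensor factors completely explicit and then read off the structure of the product directly. First I would fix coordinates. Write $\cyc(r,\bullet) = \G(\sigma/\Z/r\Z)$, where $\sigma(i) = i+1 \bmod r$; concretely its vertices are $c_0, \ldots, c_{r-1}$ with a single out-edge $c_i \to c_{i+1 \bmod r}$. For $\{T\} = \cyc(1,T)$, let $V_T$ be the vertex set of $T$, let $\rho$ be its root, and let $\pi(x)$ denote the parent of a non-root vertex $x$ (the image of $x$ under the parent map); then $\{T\}$ carries exactly the edges $x \to \pi(x)$ for $x \ne \rho$, together with the loop $\rho \to \rho$. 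By the definition of $\otimes$, the out-edges of $(c_i,x)$ in $\cyc(r,\bullet) \otimes \{T\}$ are $(c_i,x) \to (c_{i+1},\pi(x))$ when $x \ne \rho$ and $(c_i,\rho) \to (c_{i+1},\rho)$ when $x = \rho$. In particular every vertex has out-degree one, so the tensor product is itself a functional graph.

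Next I would locate the cycle. The $r$ vertices $(c_0,\rho), \ldots, (c_{r-1},\rho)$ form a single directed cycle of length $r$, since $\sigma$ is an $r$-cycle and $\rho$ is the unique periodic (indeed fixed) point of the parent map. The key computation is a depth count: writing $d_T(x)$ for the length of the directed path from $x$ to $\rho$ in $T$, iterating the out-edge $d_T(x)$ times sends $(c_j,x)$ to $(c_{j+d_T(x)\bmod r},\rho)$, and the orbit meets the cycle for the first time only then. Hence the tail length of $(c_j,x)$ is exactly $d_T(x)$ and its entry point into the cycle is $(c_{j+d_T(x)\bmod r},\rho)$. This also shows the $(c_i,\rho)$ are the only periodic points.

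Finally I would exhibit the hanging tree at each cycle vertex. Fix $i$ and consider the map $h_i : V_T \to V_{\cyc(r,\bullet)\otimes\{T\}}$ given by $h_i(x) = (c_{i-d_T(x)\bmod r}, x)$. By the entry-point computation, $h_i(x)$ lies in the hanging tree of $(c_i,\rho)$; conversely, for each $x$ there is exactly one index $j$ with $j + d_T(x) \equiv i \pmod r$, so $h_i$ is a bijection onto that hanging tree. It sends $\rho$ to $(c_i,\rho)$, and since $\pi(x)$ has depth $d_T(x)-1$ it carries the edge $x \to \pi(x)$ to $(c_{i-d_T(x)},x) \to (c_{i-d_T(x)+1},\pi(x)) = h_i(\pi(x))$, matching an out-edge of the product. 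Thus $h_i$ is a rooted-tree isomorphism from $T$ onto the hanging tree of $(c_i,\rho)$. Since every vertex lies in exactly one such hanging tree and the cycle has length $r$, this is precisely the defining data of $\cyc(r,T)$, yielding the desired isomorphism.

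The only delicate point is the bookkeeping of the cyclic indices modulo $r$ in the depth count; once the tail length and the entry point of an arbitrary vertex are pinned down, the identification of each hanging tree with $T$, and hence of the whole graph with $\cyc(r,T)$, is forced. One could alternatively phrase the entire argument through Lemma~\ref{lem:FDS}, realizing the product as $\G(\sigma\times\tau)$ with $\tau$ the parent map on $V_T$, and then analyzing the orbits of $\sigma\times\tau$ — but the substance is the same depth count.
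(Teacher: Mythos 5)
Your proof is correct: the explicit coordinates, the tail-length/depth count showing that $(c_j,x)$ first meets the cycle at $(c_{j+d_T(x)\bmod r},\rho)$, and the bijections $h_i$ identifying each hanging tree with $T$ together constitute a complete verification. The paper itself offers no proof of this lemma (it is stated among results ``whose proofs are straightforward''), and your argument is exactly the direct verification the authors had in mind, so there is nothing to reconcile.
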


An ideal $\ideal{q}$ of a Dedekind domain $\D$ is called {\it primary} if it is a power of a prime ideal (i.e. $\ideal{q}=\ideal{p}^s$ for some prime ideal $\ideal{p}\unlhd \D $ and $s \in \Z^+$). We recall that if $\n_0$ is any ideal of $\D$, there is a unique decomposition into primary ideals $\n_0= \ideal{q}_1 \cdots \ideal{q}_s$ (called the {\it primary decomposition} of $\n_0$).

\begin{theorem}\label{th:main}
Let $\D$ be a residually finite Dedekind domain, $a\in D$ be a nonzero element and $\n \unlhd \D$ be a nonzero ideal. Let $\n=\n_0\n_1$ be the $a$-decomposition of $\n$ (i.e. $\an{a}\subseteq \rad(\n_0)$ and $\gcd(\an{a},\n_1)=1$). Then, the following isomorphism holds:
$$ \mathcal{G}(\amap{\n}) = \bigoplus_{\m | \n_1} \frac{\varphi_{\D}(\m)}{\ord{a}{\m}} \times \cyc\left(\ord{a}{\m}, T_{\n_0(a)}\right);$$ where $\n_0(a)$ is the $\nu$-series associated with $\n_0$ and $a$.
\end{theorem}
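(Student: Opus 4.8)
The plan is to reduce to the two cases already settled — the coprime case (Proposition~\ref{prop:cyclic-case}) and the primary case (Proposition~\ref{prop:tree-primary}) — by means of the Chinese Remainder Theorem, and then reassemble the pieces using the tensor-product machinery of Lemmas~\ref{lem:FDS} and~\ref{lem:FDS-2} together with Corollary~\ref{cor:tree-1}.

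First I would record that the $a$-decomposition $\n=\n_0\n_1$ satisfies $\gcd(\n_0,\n_1)=1$: every prime dividing $\n_0$ divides $\an{a}$ (since $\an{a}\subseteq \rad(\n_0)$), while no prime dividing $\n_1$ does (since $\gcd(\n_1,\an{a})=1$). Hence the CRT provides a ring isomorphism $\D/\n\cong \D/\n_0\times \D/\n_1$ sending $\Psi_{\n}(a)$ to $(\Psi_{\n_0}(a),\Psi_{\n_1}(a))$. As multiplication by this element is the product of the two coordinate multiplications, the isomorphism conjugates $\amap{\n}$ to $\amap{\n_0}\times \amap{\n_1}$, and Lemma~\ref{lem:FDS} yields
$$\mathcal{G}(\amap{\n})\cong \mathcal{G}(\amap{\n_0})\otimes \mathcal{G}(\amap{\n_1}).$$

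Next I would compute each factor. For $\n_1$, since $\gcd(\an{a},\n_1)=1$, Proposition~\ref{prop:cyclic-case} gives $\mathcal{G}(\amap{\n_1})=\bigoplus_{\m\mid \n_1}\frac{\varphi_{\D}(\m)}{\ord{a}{\m}}\times \cyc(\ord{a}{\m},\bullet)$. For $\n_0$, I would take its primary decomposition $\n_0=\ideal{q}_1\cdots \ideal{q}_s$ with $\ideal{q}_i=\ppp_i^{\alpha_i}$ and each $\ppp_i\mid \an{a}$; iterating Lemma~\ref{lem:FDS} over this pairwise-coprime factorization and applying Proposition~\ref{prop:tree-primary} to each factor gives $\mathcal{G}(\amap{\n_0})\cong \bigotimes_{i=1}^{s}\{T_{\ideal{q}_i(a)}\}$. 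Repeated use of Corollary~\ref{cor:tree-1} (valid because the $\ideal{q}_i$ are pairwise coprime and $\an{a}\subseteq \rad(\ideal{q}_i)$) collapses this to $\{T_{\n_0(a)}\}$.

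Finally I would combine the two computations. Substituting and using that $\otimes$ is commutative and distributes over $\oplus$ and over the multiplicity operator $k\times$, I obtain
$$\mathcal{G}(\amap{\n})\cong \bigoplus_{\m\mid \n_1}\frac{\varphi_{\D}(\m)}{\ord{a}{\m}}\times \left(\{T_{\n_0(a)}\}\otimes \cyc(\ord{a}{\m},\bullet)\right),$$
and Lemma~\ref{lem:FDS-2} identifies each tensor factor $\{T_{\n_0(a)}\}\otimes \cyc(\ord{a}{\m},\bullet)$ with $\cyc(\ord{a}{\m},T_{\n_0(a)})$, giving the stated formula. The conceptual content is that the $a$-decomposition cleanly separates the purely cyclic dynamics (carried by $\n_1$) from the tree-like dynamics hanging off $0$ (carried by $\n_0$). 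Since the genuine work has already been done in the earlier results, the main point to be careful about is the bookkeeping: verifying that the CRT isomorphism really does intertwine the $a$-maps, and that the graph operations $\otimes$, $\oplus$ and $k\times$ interact as claimed so that the tree factor can be pulled through the disjoint union of cycles before invoking Lemma~\ref{lem:FDS-2}.
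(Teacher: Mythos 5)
Your proposal is correct and follows essentially the same route as the paper's own proof: CRT splits $\amap{\n}$ into $\amap{\n_0}\times\amap{\n_1}$, Lemma~\ref{lem:FDS} converts this to a tensor product of functional graphs, Propositions~\ref{prop:cyclic-case} and~\ref{prop:tree-primary} together with Corollary~\ref{cor:tree-1} compute the two factors, and Lemma~\ref{lem:FDS-2} reassembles the result. Your explicit check that $\gcd(\n_0,\n_1)=1$ and your remark on distributing $\otimes$ over $\oplus$ and $k\times$ are details the paper leaves implicit, but the argument is the same.
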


\begin{proof}
Since $\gcd(\n_1,\n_0)=1$, by the Chinese remainder theorem the map $\eta: \D/\n \rightarrow \D/\n_1 \times \D/\n_0$ given by $\eta(d+\n)=(d+\n_1, d+\n_0)$ provides an isomorphism of $\D$-modules. Consequently, denoting by $\Gamma_{a,\n}^*=\Gamma_{a,\n_1} \times \Gamma_{a,\n_0}$ we have the following commutative diagram 
\[ \begin{tikzcd}
\D/\n \arrow{r}{\Gamma_{a, \n}} \arrow[swap]{d}{\eta} & \D/\n \arrow{d}{\eta} \\%
\D/\n_1\times \D/\n_0 \arrow{r}{\Gamma_{a,\n}^*}& \D/\n_1\times \D/\n_0.
\end{tikzcd}
\]

%

Then, $\eta$ induces a graph isomorphism $\G(\amap{\n}) \cong \G(\amap{\n_1} \times \amap{\n_0})$ and by Lemma \ref{lem:FDS} we obtain: 
\begin{equation}\label{eq:graph-1}
\G(\amap{\n})  \cong  \G(\amap{\n_1}) \otimes \G(\amap{\n_0}).
\end{equation}

Now we consider the primary decomposition of $\n_0= \ideal{q}_1\cdots \ideal{q}_s$. In a similar way, using the Chinese remainder theorem together with Lemma \ref{lem:FDS} we obtain a graph isomorphism $\G(\amap{\n_0}) \cong \bigotimes_{i=1}^{s}\G(\amap{\ideal{q}_i})$. By Proposition \ref{prop:tree-primary} and Corollary \ref{cor:tree-1} we have:
$$ \G(\amap{\n_0}) \cong \bigotimes_{i=1}^{s}\{T_{\ideal{q}_i(a)}\} \cong \{T_{\n_0(a)}\}.$$

Substituting $\G(\amap{\n_1})$ with the expression given in Proposition \ref{prop:cyclic-case} and $\G(\amap{\n_0})$ with $\{T_{\n_0(a)}\}$  into Equation (\ref{eq:graph-1}) and applying Lemma \ref{lem:FDS-2} we have:
\begin{align*}
\G(\amap{\n})  &\cong  \bigoplus_{\m | \n}\frac{\varphi_{\D}(\m)}{\ord{a}{\m}}\times \cyc(\ord{a}{\m}, \bullet) \otimes \{T_{\n_0}(a)\}\\ &\cong \bigoplus_{\m | \n}\frac{\varphi_{\D}(\m)}{\ord{a}{\m}}\times \cyc(\ord{a}{\m}, T_{\n_0}(a))
\end{align*}

\end{proof}

\begin{example}
We provide a single example for Theorem~\ref{th:main}. We consider $\D=\Z[\sqrt{-5}]$, $a=1+\sqrt{-5}$ and $\n=\an{6}$. We observe that $\an{a}=\ppp_1\ppp_2$ and $\n=\ppp_1^2\ppp_2\ppp_3$, where $\ppp_1=\an{2, 1+\sqrt{-5}}, \ppp_2=\an{3, 1+\sqrt{-5}}$ and $\ppp_3=\an{3, 2+\sqrt{-5}}$. 
In the notation of Theorem~\ref{th:main}, we have that $\n=\n_0\n_1$, where $\n_0=\ppp_1^2\ppp_2$ and $\n_1=\ppp_3$. We obtain $\ord{a}{\ppp_3}=\varphi_{\D}(\ppp_3)=2$. The $\nu$-series associated with $\an{a}$ and $\n_0$ equals $\n_0(a)=(\N_{\D}(\ppp_1\ppp_2),\N_{\D}( \ppp_1))=(6, 2)$. From Theorem~\ref{th:main}, we have that
$$\G(\Gamma_{1+\sqrt{-5}, \an{6}})=\cyc(1, T_{(6, 2)})\oplus\cyc(2, T_{(6, 2)}).$$
The graph $\G(\Gamma_{1+\sqrt{-5}, \an{6}})$ is explicitly shown in Figure~\ref{fig:func-graph}.
\end{example}

\begin{figure}[ht]
  \centering
    \includegraphics[width=0.6\linewidth]{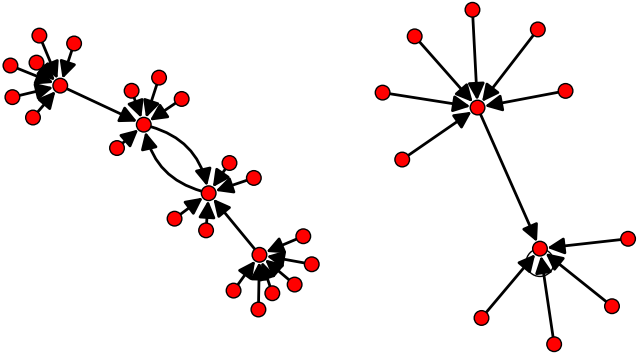}
  \caption{The functional graph of the map $x\mapsto (1+\sqrt{-5})\cdot x$ over $\frac{\Z[\sqrt{-5}]}{\an{6}}$.}
  \label{fig:func-graph}
\end{figure}

\section{Some applications}\label{sect:applications}

In this section we apply Theorem \ref{th:main} to some special families of maps over finite fields. First we show how to apply it to recover some known results about the dynamic of R{\'e}dei functions and Chebyshev polynomials. Then, we apply Theorem \ref{th:main} to describe generic trees in the functional graph of certain maps induced by endomorphism of ordinary elliptic curves. The description of these trees in terms of elementary trees associated with $\nu$-series is new. Finally, we use Theorem \ref{th:main} to obtain a complete description of the dynamic of linearized polynomials over finite fields (this description was only known for special cases). We fix some notation. Let $\F_q$ denote the finite field with $q$ elements and $\chi_q$ be the quadratic character of $\F_q$, that is, $\chi_q(a)=1$ if $a$ is a nonzero square in $\F_q$, $\chi_q(a)=-1$ if $a$ is a nonsquare in $\F_q$ and $\chi_q(0)=0$. For $n\geq 1$, $\F_{q^n}$ denotes the $n$-degree extension of the finite field $\F_q$.

\subsection{R{\'e}dei functions over finite fields}

The classical definition of R{\'e}dei function considers, for each positive integer $n$, 
the binomial expansion $(x+\sqrt{y})^{n} = N(x,y) + D(x,y) \sqrt{y}$ in two indeterminates $x$ and $y$.
Then, the R{\'e}dei function $R_n(x,a)$ of degree $n$ and parameter $a\in \F_q^{*}$ is the map given by $R_n(x,a)=\frac{N(x,a)}{D(x,a)}$ defined over the projective line $\mathbb{P}^{1}(\F_q):= \F_q \cup \{\infty\}$. Consider the map $\gamma(u)= \frac{u-\sqrt{a}}{u+\sqrt{a}}$ for $u \in \mathbb{D}_{q}:= \mathbb{P}^{1}(\F_q) \setminus \{\pm \sqrt{a}\}$. In \cite{QP15} it is proved that $\gamma$ induces an isomorphism between the functional graph $\mathcal{G}(R_n/\mathbb{D}_{q})$ and the functional graph of the map $x\to x^n$ over the multiplicative subgroup of $\F_{q^2}^{*}$ of order $q-\chi_{q}(a)$. This last map is conjugated to the multiplication-by-$n$ map $\Gamma_n : \Z/ \n \rightarrow \Z/{\n}$ where $\n=(q-\chi_{q}(a))\Z$ and we can apply Theorem \ref{th:main} to obtain the following description of the functional graph the R{\'e}dei function $R_n(x,a)$ over the finite field $\F_q$ (note that in this case $\D=\Z$ is a principal domain) according to Theorem 4.3 of \cite{QP15}.

\begin{theorem} Let $n\in \Z^{+}$, $a \in \F_q^{*}$ and $R_n=R_n(x,a)$. Write $q-\chi_q(a)=\nu \cdot \omega$ with $\nu, \omega \in \Z^+$ are such that $\mathrm{rad}(\nu)\mid \mathrm{rad}(n)$ and $\gcd(n, \omega)=1$. Then, $$\mathcal{G}(R_n/\mathbb{D}_{q}) = \bigoplus_{d \mid \omega} \frac{\varphi(d)}{ \ord{n}{d}  } \times \cyc\left( \ord{n}{d}, T_{\nu(n)} \right).$$
where $d$ runs over the positive divisors of $\omega$ and $T_{\nu(n)}$ is the tree associated with the $\nu$-series $\nu(n)$.

\end{theorem}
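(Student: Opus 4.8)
The plan is to reduce the statement to Theorem~\ref{th:main} through the conjugacy chain set up in the preceding discussion, so that no new dynamical argument is needed. First I would invoke the result of \cite{QP15} (Theorem~4.3) that the map $\gamma(u)=\frac{u-\sqrt a}{u+\sqrt a}$ induces a graph isomorphism between $\mathcal{G}(R_n/\mathbb{D}_q)$ and the functional graph of the $n$-th power map $x\mapsto x^n$ on the cyclic subgroup $C\le \F_{q^2}^*$ of order $q-\chi_q(a)$. Fixing a generator of $C$ and passing to discrete logarithms gives a group isomorphism $C\cong \Z/\n$ with $\n=(q-\chi_q(a))\Z$, under which the power map $x\mapsto x^n$ becomes the multiplication-by-$n$ map $\Gamma_{n,\n}$. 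Composing these two isomorphisms yields $\mathcal{G}(R_n/\mathbb{D}_q)\cong \mathcal{G}(\Gamma_{n,\n})$, reducing everything to an instance of the $a$-map with $\D=\Z$ and $a=n$.

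The crux is then to apply Theorem~\ref{th:main} and to verify that the factorization $q-\chi_q(a)=\nu\omega$ in the statement is exactly the $n$-decomposition of $\n$. Since $\Z$ is a PID, hence a residually finite Dedekind domain, every ideal is principal and $\rad(m\Z)=\rad(m)\Z$. Thus the condition $\an{n}\subseteq\rad(\n_0)$ reads $\rad(\nu)\mid\rad(n)$, while $\gcd(\an{n},\n_1)=1$ reads $\gcd(n,\omega)=1$; these are precisely the hypotheses imposed on $\nu$ and $\omega$. Hence $\n_0=\nu\Z$ and $\n_1=\omega\Z$, and in particular, by the convention $\an{\nu}(n)=\n_0(n)$ for principal ideals, the $\nu$-series $\nu(n)$ of the statement is exactly $\n_0(n)$.

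It remains to translate the notation of Theorem~\ref{th:main} into the classical integer setting: the ideals $\m\mid\n_1$ correspond bijectively to the positive divisors $d\mid\omega$ via $\m=d\Z$, and under this correspondence $\N_{\Z}(d\Z)=d$, the Euler function $\varphi_{\Z}(d\Z)$ is the usual $\varphi(d)$, and $\ord{n}{d\Z}$ is the multiplicative order of $n$ modulo $d$. Substituting these identifications, together with $T_{\n_0(n)}=T_{\nu(n)}$, into the formula
$$\mathcal{G}(\Gamma_{n,\n})=\bigoplus_{\m\mid\n_1}\frac{\varphi_{\Z}(\m)}{\ord{n}{\m}}\times\cyc(\ord{n}{\m},T_{\n_0(n)})$$
of Theorem~\ref{th:main} produces exactly the claimed decomposition. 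The only point genuinely requiring care is the bookkeeping of the second paragraph---that the stated radical and coprimality conditions really do pin down the $n$-decomposition of $\n$---after which the result is a direct dictionary translation of the main theorem rather than a substantive obstacle.
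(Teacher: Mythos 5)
Your proposal is correct and follows essentially the same route as the paper: the paper likewise cites the $\gamma$-conjugacy from \cite{QP15} to pass from $\mathcal{G}(R_n/\mathbb{D}_q)$ to the $n$-th power map on the cyclic subgroup of order $q-\chi_q(a)$, identifies that map with $\Gamma_{n,\n}$ for $\n=(q-\chi_q(a))\Z$, and then applies Theorem~\ref{th:main}. Your second paragraph, checking that the conditions $\mathrm{rad}(\nu)\mid\mathrm{rad}(n)$ and $\gcd(n,\omega)=1$ identify $\nu\Z$ and $\omega\Z$ with the $n$-decomposition $\n=\n_0\n_1$, makes explicit a step the paper leaves implicit, which is a welcome addition.
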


\subsection{Chebyshev polynomials over finite fields}
The Chebyshev polynomials are defined recursively as follows: $T_0(x)=2$, $T_1(x)=x$ and $T_n(x)=xT_{n-1}(x)-T_{n-2}(x)$ for $n\geq 2$. It is well known that $T_n(x)$ is a monic, degree-$n$ polynomial verifying the functional equation $T_n(x+x^{-1}) = x^n+x^{-n}$. In \cite{QP18} the authors describe the functional graph $\mathcal{G}(T_n/\F_q)$ associated with the Chebyshev polynomial $T_n$ over the finite field $\F_q$. Let $T$ be a tree in $\mathcal{G}(T_n/\F_q)$ attached to a cyclic (periodic) point $c$. We say that $T$ is a {\it generic tree} when $c\neq \pm 2$ (note that $2$ and $-2$ are fixed points of $T_n$). We can apply Theorem \ref{th:main} to describe the generic trees in the functional graph $\mathcal{G}(T_n/\F_q)$. Let $H$ be the multiplicative subgroup of $\F_{q^2}^{*}$ of order $q+1$, $\tilde{F}_{q}= \F_{q} \cup H$, $r_n : \tilde{F}_{q} \rightarrow \tilde{F}_{q}$ be the power map $r_n(\alpha)=\alpha^n$ and $\eta: \tilde{F}_{q} \rightarrow {F}_{q}$ given by $\eta(\alpha)= \alpha + \alpha^{-1}$. The following commutative diagram holds (see for instance \cite{QP18}):

\[ \begin{tikzcd}
\tilde{F}_{q} \arrow{r}{r_n} \arrow[swap]{d}{\eta} & \tilde{F}_{q} \arrow{d}{\eta} \\%
\F_q \arrow{r}{T_n}& \F_q,
\end{tikzcd}
\]

Thus, $\eta$ induces a graph homomorphism from $\mathcal{G}(r_n/\tilde{F}_{q})$ onto $\mathcal{G}(T_n/\F_{q})$. This map preserve periodic points and also the trees attached to the periodic points $\alpha \neq \pm 1$, i.e. the hanging tree of $\alpha$ in $\mathcal{G}(r_n/\tilde{F}_{q})$ is isomorphic to the hanging tree of $\eta(\alpha)$ in $\mathcal{G}(T_n/\F_{q})$; see \cite{QP18}.

Note that $\mathcal{G}(r_n/\F_q)=\mathcal{G}(\Gamma_{n,\ideal{a}})$ and $\mathcal{G}(r_n/H)=\mathcal{G}(\Gamma_{n,\ideal{b}})$ for the ideals $\ideal{a}=(q-1)\Z$ and $\ideal{b}=(q+1)\Z$. By Theorem \ref{th:main}, the trees attached to the periodic points in $\mathcal{G}(r_n/\F_q)$ and $\mathcal{G}(r_n/H)$ are isomorphic to $T_{a_0(n)}$ and $T_{b_0(n)}$, where $q-1=a_0\cdot a_1$ and $q+1=a_1b_1$ are the $n$-decomposition of $q-1$ and $q+1$, respectively. The sets $\F_q$ and $H$ are forward $r_n$-invariant but they are not backward $r_n$-invariant, thus $\mathcal{G}(r_n/\tilde{F}_{q})$ does not split into the disjoint union of $\mathcal{G}(r_n/\F_q)$ and $\mathcal{G}(r_n/H)$. Let $\tilde{S}$ denote the set of vertices in $\mathcal{G}(r_n/\tilde{F}_{q})$ which are in the same connected component of $1$ or $-1$, $\tilde{R}=\F_{q}^{*}\setminus \tilde{S}$ and $\tilde{Q}=H\setminus \tilde{S}$. These sets are both backward and forward $r_n$-invariant and we have a decomposition $$\mathcal{G}(r_n / \tilde{F}_{q}) = \mathcal{G}(r_n/ \tilde{R}) \oplus \mathcal{G}(r_n/ \tilde{Q}) \oplus \mathcal{G}(r_n/ \tilde{S}).$$ From the commutative diagram above we obtain an analogous decomposition for the functional graph of $T_n$: $$\mathcal{G}(r/\mathbb{P}^{1}(\F_q)) = \mathcal{G}(T_n/R) \oplus  \mathcal{G}(T_n/Q) \oplus\mathcal{G}(T_n/S).$$ The component $\mathcal{G}(T_n/S)$ corresponds to the connected components containing the points $\pm 2$. The trees attached to the periodic points in  $\mathcal{G}(T_n/R)$, $\mathcal{G}(r_n/ \tilde{R})$ and $\mathcal{G}(r_n/ \F_q^{*})$ are isomorphic, and the trees attached to the periodic points in $\mathcal{G}(T_n/Q)$, $\mathcal{G}(r_n/ \tilde{Q})$ and $\mathcal{G}(r_n/ H)$ are isomorphic. Let $\chi_q$ be the quadratic character of $\F_{q}$ (i.e. $\chi_q()$). If $\alpha \in \tilde{R} \cup \tilde{Q}$ and $a=\eta(\alpha)=\alpha+\alpha^{-1}$, then $\alpha$ is a root of the equation $X^2-aX+1=0$ with discriminant $a^2-4$. Then, $\alpha \in \tilde{R}$ if $\chi_q(a^2-4)=1$ and $\alpha \in \tilde{H}$ if $\chi_q(a^2-4)=-1$. From the above discussion we have the following description for the generic trees in $\mathcal{G}(T_n/\F_q)$, which is in accordance with Theorem 2 of \cite{QP18}.
\begin{prop}
Let $q-1=a_0\cdot b_0$ and $q+1=a_1b_1$ be the $n$-decomposition of $q-1$ and $q+1$, respectively. Let $a \neq 2$ be a periodic point for the Chebyshev polynomial $T_n: \F_q \rightarrow \F_q$ and $T$ be the corresponding hanging tree of $a$ in $\mathcal{G}(T_n/\F_q)$. Then, $$T= \left\{ \begin{array}{ll}
T_{a_0(n)} & \textrm{ if } \chi_q(a^2-4)=1; \\
T_{a_1(n)} & \textrm{ if } \chi_q(a^2-4)=-1.
\end{array} \right.$$
where $T_{a_0(n)}$ and $T_{a_1(n)}$ are the trees associated with the $\nu$-series $a_0(n)$ and $a_1(n)$, respectively.
\end{prop}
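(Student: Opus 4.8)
The plan is to lift the periodic point $a$ of $T_n$ to a periodic point $\alpha$ of the power map $r_n$ through the semiconjugacy $\eta$, to determine in which of the two cyclic groups $\F_q^{*}$ or $H$ this lift lives by reading the quadratic discriminant $a^2-4$, and then to identify the hanging tree using the descriptions of $\mathcal{G}(r_n/\F_q^{*})$ and $\mathcal{G}(r_n/H)$ already obtained from Theorem~\ref{th:main} (via the maps $\Gamma_{n,(q-1)\Z}$ and $\Gamma_{n,(q+1)\Z}$).

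First I would fix a periodic point $a\neq\pm 2$ of $T_n$ and pick $\alpha\in\tilde{F}_q$ with $\eta(\alpha)=a$; such $\alpha$ exists by surjectivity of $\eta$, and it is a root of $X^2-aX+1=0$, whose two roots are precisely $\alpha$ and $\alpha^{-1}$. I would then verify that $\alpha$ is periodic for $r_n$: letting $m$ be the period of $a$, commutativity of the diagram gives $\eta(r_n^{(m)}(\alpha))=T_n^{(m)}(a)=a$, so $r_n^{(m)}(\alpha)\in\{\alpha,\alpha^{-1}\}$, and since $r_n$ commutes with inversion one obtains $r_n^{(2m)}(\alpha)=\alpha$. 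As $\eta(\pm 1)=\pm 2$ and $a\neq\pm 2$, we have $\alpha\neq\pm 1$, and the tree-preservation property of $\eta$ recorded before the statement shows that the hanging tree of $a$ in $\mathcal{G}(T_n/\F_q)$ is isomorphic to the hanging tree of $\alpha$ in $\mathcal{G}(r_n/\tilde{F}_q)$.

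Next I would decide whether $\alpha\in\F_q^{*}$ or $\alpha\in H$ from the discriminant $a^2-4$. If $\chi_q(a^2-4)=1$ both roots lie in $\F_q^{*}$ (they are nonzero, their product being $1$); if $\chi_q(a^2-4)=-1$ the roots lie in $\F_{q^2}\setminus\F_q$, and then the Galois conjugate of $\alpha$ is the other root $\alpha^{-1}$, so $\alpha^{q+1}=\alpha\cdot\alpha^{q}=\alpha\cdot\alpha^{-1}=1$ and $\alpha\in H$. It then remains to transfer the hanging tree of $\alpha$ from $\mathcal{G}(r_n/\tilde{F}_q)$ to $\mathcal{G}(r_n/\F_q^{*})$ (resp. $\mathcal{G}(r_n/H)$). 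For this I would observe that a periodic point $\alpha\neq\pm 1$ cannot lie in $\tilde{S}$: the only cyclic points in the connected components of $1$ or $-1$ are $1$ and $-1$ themselves, since $1$ is fixed and $-1$ is either fixed or a non-periodic preimage of $1$. Hence $\alpha\in\tilde{R}$ or $\alpha\in\tilde{Q}$, and as these sets are both forward and backward $r_n$-invariant, the whole hanging tree of $\alpha$ stays inside $\F_q^{*}$ (resp. $H$). By the discussion preceding the statement, which applies Theorem~\ref{th:main} to the two groups, this tree is isomorphic to $T_{a_0(n)}$ when $\alpha\in\F_q^{*}$ and to $T_{a_1(n)}$ when $\alpha\in H$, which is exactly the claimed dichotomy.

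I expect the main obstacle to be the bookkeeping around $\tilde{S}$: one must ensure that the tree computed abstractly inside the single group via Theorem~\ref{th:main} genuinely coincides with the tree seen in the larger graph $\mathcal{G}(r_n/\tilde{F}_q)$, which is precisely why the backward invariance of $\tilde{R}$ and $\tilde{Q}$ together with the location of the periodic points relative to $\pm 1$ must be established carefully before invoking the tree-preservation of $\eta$.
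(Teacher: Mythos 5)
Your proposal is correct and follows essentially the same route as the paper: lifting through the semiconjugacy $\eta$, using the discriminant $\chi_q(a^2-4)$ to place the lift $\alpha$ in $\F_q^{*}$ or $H$, invoking the forward/backward invariance of $\tilde{R}$ and $\tilde{Q}$ to keep the hanging tree inside the corresponding group, and then reading off the tree from Theorem~\ref{th:main} applied to $\Gamma_{n,(q-1)\Z}$ and $\Gamma_{n,(q+1)\Z}$. In fact you supply slightly more detail than the paper does (the periodicity of the lift via $r_n^{(2m)}(\alpha)=\alpha$, and the observation that the only periodic points in $\tilde{S}$ are $\pm 1$), which only strengthens the argument.
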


\subsection{Maps induced by endomorphism of ordinary elliptic curves over finite fields}

In \cite{Ugolini18} Ugolini studied the functional graph of maps induced by certain endormorphisms of ordinary elliptic curves whose endomorphism ring $\D$ is isomorphic to the maximal order $\mathcal{O}_{\mathbb{K}}$ of a quadratic imaginary field $\mathbb{K}$ (in particular $\D$ is a Dedekind domain). To simplify let us assume that the characteristic of $\F_q$ is neither $2$ or $3$, and consider an elliptic curve over $\F_q$ of the form $E: Y^2=f(X)$ with $f(X)=X^3+a_1X+a_2$ and $4a_1^2+27a_2^3 \neq 0$. Ugolini considers an endomorphism $\alpha: E \rightarrow E$ of the form $\alpha(x,y)= (\alpha_1(x), y \alpha_2(x))$ with $\alpha_1(x)=a(x)/b(x)$, $a(x),b(x) \in \F_q[x]$ and $\gcd(a(x),b(x))=1$; and the map $r: \mathbb{P}^{1}(\F_{q^n}) \rightarrow \mathbb{P}^{1}(\F_{q^n})$ given by $r(x)=\alpha_1(x)$ if $b(x)\neq 0$ and $r(x)=\infty$ otherwise.

Since $\alpha(x_0,0)=(\alpha_1(x_0),0)$ if $b(x_0)\neq 0$ and $\alpha(x_0,0)=\mathcal{O}$ otherwise, if $x_0\in \F_q$ is a root of $f$  we have that $r(x_0)$ is either a root of $f$ or $\infty$. Thus, the set $Z_f := \{x\in \F_q: f(x)=0\}\cup \{\infty\}$ is forward $r$-invariant. Let $p \in \mathbb{P}^{1}(\F_{q^n})$ be a periodic point of $r$ and $T$ be the corresponding hanging tree of $p$ in the functional graph $\mathcal{G}\left(r/\mathbb{P}^{1}(\F_{q^n})\right)$. We say that $T$ is a {\it generic tree} when $p\not\in Z_f$ (note that $\#\Z_f \leq 4$). We can obtain an explicit description of the generic trees in the functional graph of $r$ in terms of $\nu$-series, as a consequence of Theorem \ref{th:main}. If we denote by $\mathcal{X}=\{P \in E(\overline{\F_q}) : x(P)\in \F_{q^n}\} \cup \{ \mathcal{O} \}$ we have the following commutative diagram:

\[ \begin{tikzcd}
\mathcal{X} \arrow{r}{\alpha} \arrow[swap]{d}{x} & \mathcal{X} \arrow{d}{x} \\%
\mathbb{P}^{1}(\F_{q^n}) \arrow{r}{r}& \mathbb{P}^{1}(\F_{q^n}),
\end{tikzcd}
\]

where $x$ denotes the map taking the $x$-coordinate of the point $P\in \mathcal{X}$ (by convention $x(\mathcal{O})=\infty$).\\

Moreover, Ugolini showed that $\mathcal{X}= E(\F_{q^n}) \cup E(\F_{q^n})_{B_n}$ where $E(\F_{q^n})_{B_n} =\{(x,y) \in E(\F_{q^2}): x \in \F_{q^n}, y \in \F_{q^2}\setminus \F_{q^n}  \}$ and there are morphisms of $\D$-modules: $E(\F_{q^n}) \simeq \D/\n$ and $E(\F_{q^n})_{B_n} \simeq \D/\m$ where  $\D= \mathcal{O}_{\mathbb{K}}$, $\n=\an{\pi_q^n-1}$ and $\m= \an{\pi_q^n +1}$ (being $\pi_q$ the Frobenius endomorphism $\pi_q(x,y)=(x^q,y^q)$). If $a$ denotes the element of $\D$ corresponding to the endomorphism $\alpha$ we have the following isomorphism of functional graphs: $ \mathcal{G}(\alpha/E(\F_{q^n}) ) \simeq \mathcal{G}(\amap{\n} )$ and $\mathcal{G}(\alpha/E(\F_{q^{2n}})_{B_n} ) \simeq \mathcal{G}(\amap{\m} )$.\\

Note that the sets $E(\F_{q^n})$ and $E(\F_{q^n})_{B_n}$ are forward $\alpha$-invariant. If we denote by $\tilde{S}$ the set of vertices in $\mathcal{G}(\alpha/ \mathcal{X})$ which are in the same connected component of some point $P\in \mathcal{X}$ with $x(P)\in \Z_f$ and define $\tilde{R} = E(\F_{q^n}) \setminus \tilde{S}$ and $\tilde{Q}= E(\F_{q^{2n}})_{B_n} \setminus \tilde{S}$, the following decomposition holds: $$\mathcal{G}(\alpha / \mathcal{X}) = \mathcal{G}(\alpha/ \tilde{R}) \oplus \mathcal{G}(\alpha/ \tilde{Q}) \oplus \mathcal{G}(\alpha/ \tilde{S}).$$ From the commutative diagram above, a similar decomposition for the functional graph of $r$ holds: $$\mathcal{G}(r/\mathbb{P}^{1}(\F_{q^n})) = \mathcal{G}(r/R) \oplus  \mathcal{G}(r/Q) \oplus\mathcal{G}(r/S).$$ The map $x$ does not preserve the length of the cycles but it preserves the rooted trees attached to the periodic points $c\not\in Z_f$. Note that if $c \in R$, there is a point $y \in \F_{q^n}^{*}$ such that $f(c)=y^2$ (i.e. $f(c)$ is a non-zero square in $\F_{q^n}$); and if $c \in Q$, there is a point $y \in \F_{q^{2n}}\setminus \F_{q^n}$ such that $f(c)=y^2$ (i.e. $f(c)$ is a nonsquare in $\F_{q^n}$). From the above discussion we obtain the following description for the generic trees in the functional graph $\mathcal{G}(r/\mathbb{P}^{1}(\F_{q^n}))$.

\begin{prop}
Let $E: Y^2= X^3+a_1X+a_2$ be an elliptic curve over $\F_q$, $\alpha: E \rightarrow E$ and $r:\mathbb{P}^{1}(\F_{q^n}) \rightarrow \mathbb{P}^{1}(\F_{q^n}) $ be an endomorphism of $E$ defined over $\F_q$ and its associate rational map, respectively. Suppose that the endomorphism ring of $E$ is isomorphic to a maximal order $\D$ of an imaginary quadratic field and $\alpha=(\alpha_1(x), y \alpha_2(x))$ with $\alpha_1, \alpha_2 \in \F_q(x)$. Let $a$ be the element of $\D$ corresponding to the endomorphism $\alpha$, $\pi_q$ be the Frobenius endomorphism and $\n$ and $\m$ be the ideals of $\D$ generated by $\pi_q^n-1$ and $\pi_q^n+1$, respectively. Consider the $a$-decomposition of the ideals $\n$ and $\m$ given by $\n=\n_0\n_1$ and $\m=\m_0\m_1$. Let $c \in \F_q$ be a periodic point of $r:\mathbb{P}^{1}(\F_{q^n}) \rightarrow \mathbb{P}^{1}(\F_{q^n})$ such that $c^3+a_1c+a_2 \neq 0$ and $T$ its corresponding hanging tree in the functional graph of $r$. Then,
$$T= \left\{ \begin{array}{ll}
T_{\n_0(a)} & \textrm{ if } \chi_{q^n}(c^3+a_1c+a_2)=1; \\
T_{\m_0(a)} & \textrm{ if } \chi_{q^n}(c^3+a_1c+a_2)=-1;
\end{array} \right.$$  
where $T_{\n_0(a)}$ and $T_{\m_0(a)}$ are the trees associated with the $\nu$-series $\n_0(a)$ and $\m_0(a)$, respectively.
\end{prop}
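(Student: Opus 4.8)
The plan is to reduce the statement, via the commutative square relating $\alpha$ and $r$ together with the $\D$-module isomorphisms $E(\F_{q^n}) \simeq \D/\n$ and $E(\F_{q^n})_{B_n} \simeq \D/\m$, to the identification of a single hanging tree inside $\mathcal{G}(\amap{\n})$ or $\mathcal{G}(\amap{\m})$, which is then supplied by Theorem~\ref{th:main}. The quadratic character $\chi_{q^n}(f(c))$, with $f(c) = c^3 + a_1 c + a_2$, will decide which of the two modules is relevant.

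First I would fix the fiber. Since $f(c) \neq 0$, the fiber $x^{-1}(c)$ consists of exactly two points $P = (c, y)$ and $-P = (c, -y)$ with $y^2 = f(c)$. If $\chi_{q^n}(f(c)) = 1$ then $y \in \F_{q^n}^{*}$ and both $P, -P$ lie in $E(\F_{q^n})$; if $\chi_{q^n}(f(c)) = -1$ then $y \in \F_{q^{2n}} \setminus \F_{q^n}$ and both lie in $E(\F_{q^n})_{B_n}$. I would also record that negation $(x, y) \mapsto (x, -y)$ commutes with $\alpha$, because $\alpha(x,y) = (\alpha_1(x), y\alpha_2(x))$; hence $P$ and $-P$ have isomorphic hanging trees, which is the structural reason behind the cited fact that $x$ preserves the rooted trees attached to periodic points outside $Z_f$ while not preserving cycle lengths. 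Indeed, if $c$ has $r$-period $m$ then $x(\alpha^m(P)) = r^m(c) = c$ forces $\alpha^m(P) \in \{P, -P\}$, so $P$ is $\alpha$-periodic with period $m$ or $2m$.

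Next I would transfer the tree. As $c$ is periodic and $c \notin Z_f$, the cited tree-preservation property identifies $T$ with the hanging tree of $P$ in $\mathcal{G}(\alpha/\mathcal{X})$. Since $Z_f$ is forward $r$-invariant and $c \notin Z_f$, the hanging tree of $c$ avoids $Z_f$, so the hanging tree of $P$ avoids $x^{-1}(Z_f)$ and therefore lies entirely inside $\tilde{R}$ (if $\chi_{q^n}(f(c)) = 1$) or $\tilde{Q}$ (if $\chi_{q^n}(f(c)) = -1$); here I use that $\tilde{R} \subseteq E(\F_{q^n})$ and $\tilde{Q} \subseteq E(\F_{q^n})_{B_n}$ are both forward and backward $\alpha$-invariant, so no orbit entering $P$ can cross between the two modules. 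Consequently the hanging tree of $P$ may be computed inside $\mathcal{G}(\alpha/E(\F_{q^n}))$ or $\mathcal{G}(\alpha/E(\F_{q^n})_{B_n})$, and transporting through the isomorphisms $\mathcal{G}(\alpha/E(\F_{q^n})) \simeq \mathcal{G}(\amap{\n})$ and $\mathcal{G}(\alpha/E(\F_{q^n})_{B_n}) \simeq \mathcal{G}(\amap{\m})$, it becomes the hanging tree of a periodic point of $\amap{\n}$ or $\amap{\m}$. By Theorem~\ref{th:main} every such tree is isomorphic to $T_{\n_0(a)}$, respectively $T_{\m_0(a)}$, since these are exactly the trees appearing in the summands $\cyc(\cdot, T_{\n_0(a)})$ and $\cyc(\cdot, T_{\m_0(a)})$. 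Reading off the two cases by the sign of $\chi_{q^n}(f(c))$ gives the claimed formula.

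The main obstacle is the transfer of trees across the $2$-to-$1$ projection $x$: one must be sure that passing to $x$-coordinates collapses cycles (so lengths are not preserved) yet leaves the attached rooted trees intact. This rests on the negation symmetry above and is the content imported from \cite{Ugolini18}; granting it, the remaining work is the invariance bookkeeping that keeps the whole component of $P$ inside a single module, after which Theorem~\ref{th:main} finishes the argument with no further computation.
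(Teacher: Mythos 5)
Your proposal is correct and takes essentially the same route as the paper, whose own proof is precisely the discussion preceding the proposition: the commutative diagram linking $\alpha$ and $r$, the decomposition into the forward-and-backward invariant sets $\tilde{R}$, $\tilde{Q}$, $\tilde{S}$, the tree-preserving property of the $x$-coordinate map, the $\D$-module isomorphisms $E(\F_{q^n})\simeq \D/\n$ and $E(\F_{q^n})_{B_n}\simeq \D/\m$, and Theorem \ref{th:main}; in fact you supply more of the underlying detail (the negation symmetry behind tree preservation, the fiber analysis, the period $m$ versus $2m$ observation) than the paper, which imports these facts from Ugolini's work. One minor patch is worth recording: placing $P$ in $\tilde{R}$ (resp.\ $\tilde{Q}$) requires the whole connected component of $P$, not merely its hanging tree, to avoid $x^{-1}(Z_f)$, and this follows because any point of that component lying above $Z_f$ would flow forward --- remaining above $Z_f$ by forward invariance --- into the cycle of $P$, which would force $c \in Z_f$, a contradiction.
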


Let's consider an example taken from \cite{Ugolini18} (Example 4.1)

\begin{example}
Let $E:Y^2=X^3-X$ be the elliptic curve defined over $\F_{73}$ and consider the endomorphism $\alpha(x,y)=(\alpha_1(x), y \alpha_2(x))$ with $\alpha_1(x)=\frac{-3(x^{10}-3x^8+5x^6-5x^4+3x^2-1)}{x^9-28x^7-21x^5+28x^3+x}$. Let $r: \mathbb{P}^{1}(\F_{73}) \rightarrow \mathbb{P}^{1}(\F_{73})$ be the map induced by $\alpha$ as above. The endomorphism ring of $E/\F_{73}$ is isomorphic to $\D= \Z[i]$ and the element of $\D$ corresponding to $\alpha$ is $a=3-i$. The Frobenius endomorphism is represented by $\pi_{73} = -3+8i$. In this case, none of the roots of $X^3-X=0$ is a periodic point of $r$ and $\infty$ is a fixed point. The periodic points of $r$ in $\F_{73}$ are $c=52$, $29$, $59$, $30$, $21$, $44$, $14$ and $43$; for any of these values of $c$ we have $\chi_{73}(c^3-c)=-1$. The $a$-decomposition of the ideal $\m= \langle \pi_{73}+1 \rangle$ is given by $\m=\m_0\cdot \m_1$ with $\m_0=\langle 1+i \rangle^2$ and $\m_1= \langle -1+4i \rangle$. Since $\langle a \rangle = \langle 1+i \rangle\cdot \langle 1-2i \rangle$, we have $\m_0(a)=\left( N_{\D}(1+i), N_{\D}(1+i) \right)=(2,2)$. Therefore, all the trees attached to the periodic points $c\in \F_{73}$ of $r$ are isomorphic to $T_{(2,2)}= \langle \langle 2\times \bullet \rangle \rangle$.

\end{example}

\subsection{Linearized polynomials over finite fields}

Here we use Theorem \ref{th:main} to provide a complete description of the dynamic of linearized polynomials over finite fields, extending results of \cite{MV88, PR18}. As mentioned in the introduction, these polynomials, which induces linear maps over finite fields, appear in many practical applications. They also play an important roll in error-correcting-codes in the rank metric \cite{XLYS13}. Let $p$ denote the characteristic of $\F_q$.

\begin{define}
For $f\in \F_q[x]$ with $f(x)=\sum_{i=0}^ma_ix^i$, $L_f(x)=\sum_{i=0}^ma_ix^{q^i}$ is the $q$-associate (or the linearized polynomials) of $f$.
\end{define}

From the Frobenius identity $(a+b)^p=a^p+b^p$ for any $a, b\in \F_{q^n}$, we observe that, for any $f\in \F_q[x]$, the map $c\mapsto L_f(c)$ is an $\F_q$-linear map of $\F_{q^n}$. The dynamics of maps $c\mapsto L_f(c)$ over the finite field $\F_{q^n}$ were previously studied. In~\cite{MV88}, the authors explore the case $\gcd(f(x), x^n-1)=1$, where the linear map induced by $L_f$ is, in fact, a permutation of $\F_{q^n}$. More recently~\cite{PR18}, the authors explore the case that $\gcd(f(x), x^n-1)$ is an irreducible polynomial. It is worth mentioning that a general method to describe the dynamics of linear maps over finite fields is given in~\cite{T05}. However, as pointed out in~\cite{PR18}, such a method cannot readily describe the dynamics of maps $L_f$ over $\F_{q^n}$ if $n$ is divisible by the characteristic $p$ of $\F_q$. We provide a simple description for the functional graph of $L_f$ in terms of the factorial decomposition of $f$.\\

Next we describe the dynamics of the map $c\mapsto L_f(c)$ over $\F_{q^n}$, without any restriction on the polynomial $f\in \F_q[x]$ and the positive integer $n$. We recall that an element $\beta\in \F_{q^n}$ is said to be \emph{normal} over $\F_q$ if the set $\{\beta, \beta^q, \ldots, \beta^{q^{n-1}}\}$ comprises a basis for $\F_{q^n}$ as an $\F_q$-vector space. It is well-known that normal elements exist in any finite field extension. The following lemma provides some nice properties of the $q$-associate of polynomials over $\F_q$. Its proof is direct by calculations so we omit.

\begin{lemma}\label{lem:q-associate}
For $f, g\in \F_q[x]$, we have that $L_{f+g}(x)=L_f(x)+L_g(x)$ and $L_{fg}(x)=L_f(L_g(x))$.
\end{lemma}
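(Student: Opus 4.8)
The plan is to prove the two stated identities $L_{f+g}(x)=L_f(x)+L_g(x)$ and $L_{fg}(x)=L_f(L_g(x))$ directly from the definition of the $q$-associate, writing $f(x)=\sum_{i=0}^m a_i x^i$ and $g(x)=\sum_{j=0}^k b_j x^j$ and comparing coefficients. The additivity is immediate: since $L_{f+g}$ replaces each monomial $c_i x^i$ by $c_i x^{q^i}$ and this assignment is $\F_q$-linear in the coefficient vector, summing the two coefficient sequences before or after applying the $q$-associate map gives the same polynomial. The only mild care needed is to pad the shorter coefficient sequence with zeros so that both sums run over the same index range, but this is purely bookkeeping.

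The multiplicative identity is the substantive one, and the key input is the Frobenius-type semigroup law $\left(x^{q^i}\right)^{q^j}=x^{q^{i+j}}$, which lets iterated $q$-th powers add exponents. First I would compute $L_g(x)=\sum_{j} b_j x^{q^j}$ and then evaluate $L_f$ at this polynomial. Because $L_f(y)=\sum_i a_i y^{q^i}$, substituting $y=L_g(x)$ and using that over $\F_{q^n}$ (characteristic $p$, with $q$ a power of $p$) the map $y\mapsto y^{q^i}$ is additive and $\F_q$-semilinear, one distributes the $q^i$-th power across the sum defining $L_g$. Concretely, $\left(\sum_j b_j x^{q^j}\right)^{q^i}=\sum_j b_j^{q^i} x^{q^{i+j}}$; since the coefficients $b_j$ lie in $\F_q$, they satisfy $b_j^{q^i}=b_j$, so this collapses to $\sum_j b_j x^{q^{i+j}}$. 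Assembling the double sum gives $L_f(L_g(x))=\sum_{i,j} a_i b_j x^{q^{i+j}}$, and reindexing by $\ell=i+j$ yields $\sum_\ell \left(\sum_{i+j=\ell} a_i b_j\right) x^{q^\ell}$, whose inner coefficient is exactly the $\ell$-th coefficient of the ordinary product $fg$; hence this equals $L_{fg}(x)$.

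The step I expect to be the only genuine point requiring attention is the justification that $\left(\sum_j b_j x^{q^j}\right)^{q^i}=\sum_j b_j x^{q^{i+j}}$, which silently uses both the additivity of the Frobenius (so the $q^i$-th power of a sum is the sum of the $q^i$-th powers, not merely termwise but exactly, over a field of characteristic $p$) and the fact that $b_j\in\F_q$ is fixed by the $q$-power map. Since the $b_j$ are coefficients drawn from $\F_q$ rather than from the larger field $\F_{q^n}$, the fixed-point property $b_j^{q}=b_j$ holds and there is no loss; had the coefficients been allowed in $\F_{q^n}$, the identity would fail and one would obtain a twisted (Ore) product instead. Because the paper states the coefficients lie in $\F_q$, this subtlety resolves cleanly, and the remainder is the routine reindexing of the double sum. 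This is precisely why the authors remark that the proof is direct by calculation, and I would present only the coefficient comparison and the Frobenius identity explicitly, omitting the elementary verification that the index ranges match up.
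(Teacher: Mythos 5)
Your proof is correct and is exactly the ``direct calculation'' that the paper alludes to when it omits the proof of Lemma~\ref{lem:q-associate}: additivity by linearity of the coefficient substitution, and the composition law via the double sum $\sum_{i,j} a_i b_j x^{q^{i+j}}$ using the characteristic-$p$ identity $(A+B)^{q^i}=A^{q^i}+B^{q^i}$ together with $b_j^{q^i}=b_j$ for $b_j\in\F_q$. One cosmetic remark: since the lemma is an identity of polynomials, the Frobenius additivity should be invoked as a polynomial identity in $\F_q[x]$ (it holds in any commutative $\F_p$-algebra), not merely as an identity of maps on $\F_{q^n}$, but your computation is in fact carried out at the polynomial level, so nothing is lost.
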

We have the following lemma.
\begin{lemma}
Let $\beta\in \F_{q^n}$ be a normal element. Then the map $\Pi_{\beta}:\frac{\F_q[x]}{\an{x^n-1}}\to \F_{q^n}$ given by $g\mapsto L_g(\beta)$ is an isomorphism of $\F_q$-vector spaces.
\end{lemma}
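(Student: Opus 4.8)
The plan is to show that $\Pi_\beta$ is a well-defined $\F_q$-linear map between two $\F_q$-vector spaces of the same (finite) dimension $n$, and that it is injective; surjectivity then follows automatically by dimension count. First I would verify that $\Pi_\beta$ is well-defined on the quotient $\F_q[x]/\an{x^n-1}$. The potential issue is that $\Pi_\beta$ is defined via a representative $g$, so I must check that if $g \equiv h \pmod{x^n-1}$ then $L_g(\beta)=L_h(\beta)$. Writing $g-h=(x^n-1)\cdot s$ for some $s\in \F_q[x]$ and using the multiplicativity from Lemma~\ref{lem:q-associate}, we have $L_{g-h}(\beta)=L_{(x^n-1)s}(\beta)=L_s(L_{x^n-1}(\beta))$. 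Since $L_{x^n-1}(x)=x^{q^n}-x$ and $\beta\in \F_{q^n}$, we get $L_{x^n-1}(\beta)=\beta^{q^n}-\beta=0$, hence $L_{g-h}(\beta)=L_s(0)=0$, and by additivity $L_g(\beta)=L_h(\beta)$. This shows $\Pi_\beta$ factors through the quotient.

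Next I would establish $\F_q$-linearity. Additivity, $\Pi_\beta(g+h)=\Pi_\beta(g)+\Pi_\beta(h)$, is immediate from $L_{g+h}=L_g+L_h$ in Lemma~\ref{lem:q-associate}. For scalar multiplication by $c\in \F_q$, note that the constant polynomial $c$ has $q$-associate $L_c(x)=cx$, so $L_{cg}=L_{c}\circ L_g$ gives $L_{cg}(\beta)=c\,L_g(\beta)$; alternatively this is clear directly since scaling all coefficients of $g$ by $c$ scales $L_g$ by $c$. Thus $\Pi_\beta$ is $\F_q$-linear.

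The main content, and the step I expect to carry the real weight, is injectivity, which is exactly where the normality hypothesis on $\beta$ is used. Suppose $g\in \F_q[x]$ has degree less than $n$ (a canonical representative of its class modulo $x^n-1$), say $g(x)=\sum_{i=0}^{n-1}a_i x^i$, and that $\Pi_\beta(g)=L_g(\beta)=\sum_{i=0}^{n-1}a_i \beta^{q^i}=0$. Because $\beta$ is normal, the conjugates $\beta,\beta^q,\ldots,\beta^{q^{n-1}}$ form an $\F_q$-basis of $\F_{q^n}$, so they are $\F_q$-linearly independent; this forces $a_0=a_1=\cdots=a_{n-1}=0$, i.e.\ $g\equiv 0 \pmod{x^n-1}$. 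Hence $\ker \Pi_\beta=0$ and $\Pi_\beta$ is injective.

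Finally, both $\F_q[x]/\an{x^n-1}$ and $\F_{q^n}$ are $\F_q$-vector spaces of dimension $n$ (the quotient has the basis $1,x,\ldots,x^{n-1}$, and $\F_{q^n}$ has $\F_q$-dimension $n$). An injective linear map between finite-dimensional spaces of equal dimension is automatically surjective, so $\Pi_\beta$ is an isomorphism of $\F_q$-vector spaces. The only genuinely substantive ingredient is the use of normality to pass from $L_g(\beta)=0$ to the vanishing of all coefficients; everything else is a routine consequence of Lemma~\ref{lem:q-associate} together with the identity $L_{x^n-1}(x)=x^{q^n}-x$.
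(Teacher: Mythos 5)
Your proposal is correct and follows essentially the same route as the paper: well-definedness via Lemma~\ref{lem:q-associate} and the identity $L_{x^n-1}(\beta)=\beta^{q^n}-\beta=0$ (the paper composes as $L_{x^n-1}\circ L_h$ and invokes $L_h(\beta)^{q^n}=L_h(\beta)$, a trivial variant of your $L_s\circ L_{x^n-1}$ order), linearity from the same lemma, and injectivity from the linear independence of the conjugates $\beta,\beta^q,\ldots,\beta^{q^{n-1}}$ guaranteed by normality, concluded by a dimension count. No gaps; nothing further to add.
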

\begin{proof}
The map $\Pi_{\beta}$ is well defined because if $f, g \in \F_q[x]$ are such that $f(x)-g(x)=(x^n-1)h(x)$ for some $h \in F_q[x]$ we have $Lf(x)-Lg(x)=L_h^{q^n}(x)-L_h(x)$ (by Lemma~\ref{lem:q-associate}). Since $L_h(\beta)\in \F_{q^n}$, $L_h^{q^n}(\beta)=L_h(\beta)$ and then $L_f(\beta)=L_g(\beta)$. Lemma~\ref{lem:q-associate} also implies that the  map $\Pi_{\beta}$ is linear. It is direct to verify that $\dim_{\F_q}\frac{\F_q[x]}{\an{x^n-1}}=n=\dim_{\F_q}\F_q^n$, hence it suffices to show that $\Pi_{\beta}$ is one to one. We observe that an element in $\frac{\F_q[x]}{\an{x^n-1}}$ has a representative with degree at most $n-1$. In addition, if a nonzero polynomial $g\in \F_q[x]$ of degree at most $n-1$ is such that $L_g(\beta)=0$, the last equality entails that a nontrivial linear combination of the elements $\beta, \beta^q, \ldots, \beta^{q^{n-1}}$ with
coefficients in $\F_q$ vanishes. In particular, such elements are linearly dependent, a contradiction since $\beta$ is normal over $\F_q$. Therefore, $\Pi_{\beta}$ is one to one, hence it is an isomorphism of $\F_q$-vector spaces.
\end{proof}

From Lemma~\ref{lem:q-associate}, for any $f\in \F_q[x]$ with $f(x)=\sum_{i=0}^na_ix^i$, if $\Gamma_{f}:\frac{\F_q[x]}{\an{x^n-1}}\to \frac{\F_q[x]}{\an{x^n-1}}$ denotes the multiplication-by-$f$-map $g\mapsto f\cdot g$, we have the following commutative diagram:

\[ \begin{tikzcd}
\frac{\F_q[x]}{\an{x^n-1}} \arrow{r}{\Gamma_f} \arrow[swap]{d}{\Pi_{\beta}} & \frac{\F_q[x]}{\an{x^n-1}} \arrow{d}{\Pi_{\beta}} \\%
\F_{q^n} \arrow{r}{L_f}& \F_{q^n}.
\end{tikzcd}
\]

Since $\Pi_{\beta}$ is an isomorphism, we infer the dynamics of the map $c\mapsto L_f(c)$ over $\F_{q^n}$ has the same cycle structure of the map $\Gamma_{f}$ over $\frac{\F_q[x]}{\an{x^n-1}}$. Since $\F_q[x]$ is a residually finite Dedekind domain (actually, is an Euclidean domain), Theorem~\ref{th:main} applies to the map $\Gamma_f$. The arithmetic functions appearing in Theorem~\ref{th:main} are readily computed for $\F_q[x]$. In fact, since $\F_q[x]$ is an Euclidean domain, we can speak of the norm and the Euler Phi function evaluated at elements of $\F_q[x]$ instead of its ideals. For $\D=\F_q[x]$ and a nonzero polynomial $f\in \F_q[x]$, we have that $\N_{\D}(f)=q^{\deg(f)}$. The Euler function is written as $\varphi_{\D}=\Phi_q$ and can be computed as follows: if $g\in \F_q[x]$ is irreducible, $\Phi_q(g)=q^{\deg(g)}-1$ and, if $f\in\F_q[x]$ factors into irreducible polynomials over $\F_q$ as
$$f(x)=f_1(x)^{e_1}\ldots, f_s(x)^{e_s},$$
where $e_i\ge 1$, we have that
$$\Phi_q(f)=q^{\deg(f)}\prod_{i=1}^{s}\left(1-\frac{1}{q^{\deg(f_i)}-1}\right).$$
Additionally, for relatively prime polynomials $f, g\in \F_q[x]$, $\ou(f, g)$ is the least positive integer $k$ such that $f^k\equiv 1\pmod g$. Note also that if $n=p^t \cdot u$ with $p\nmid u$, then $x^n-1 = (x^u-1)^{p^t}$ where $x^u-1$ is a product of irreducible polynomials over $\F_q$. All in all, Theorem~\ref{th:main} entails the following result. 

\begin{theorem}
Let $f\in \F_q[x]$ be a nonzero polynomial and $n=p^t\cdot u$ with $p\nmid u$. Set $h(x):=\gcd(f(x), x^u-1)$ and $S_f(x):=\frac{x^u-1}{h(x)}$. Then the functional graph $\G(L_f/\F_{q^n})$ of the map $c\mapsto L_f(c)$ over $\F_{q^n}$ is given as follows

\begin{equation}\label{eq:linear-maps}
\G(L_f/\F_{q^n})=\bigoplus_{g|S_f^{p^t}}\frac{\Phi_q(g)}{\ou(f, g)}\times \cyc\left(\ou(f, g), T_{h^{p^t}(f)}\right),
\end{equation}
where $g \in \F_q[x]$ runs over the monic divisors of $S_f^{p^t}$ (over $\F_q$) and $T_{h^{p^t}(f)}$ is the tree of the $\nu$-series associated with $h^{p^t}$ and $f$.
\end{theorem}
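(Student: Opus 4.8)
The plan is to apply Theorem~\ref{th:main} to the map $\Gamma_f$ over the ring $\D=\F_q[x]$ with the ideal $\n=\an{x^n-1}$, and then translate the abstract statement into the concrete language of polynomial arithmetic. The main setup has already been established in the excerpt: the isomorphism $\Pi_{\beta}$ and the commutative diagram show that $\G(L_f/\F_{q^n})$ and $\G(\Gamma_f)$ over $\frac{\F_q[x]}{\an{x^n-1}}$ have the same cycle structure. So the entire content of the proof is to identify the pieces of the formula in Theorem~\ref{th:main} for this particular choice of $\D$, $a=f$, and $\n=\an{x^n-1}$.

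First I would compute the $a$-decomposition $\n=\n_0\n_1$ of $\n=\an{x^n-1}$ with respect to $f$. Writing $n=p^t u$ with $p\nmid u$, we have the factorization $x^n-1=(x^u-1)^{p^t}$, and since $p\nmid u$ the polynomial $x^u-1$ is squarefree, i.e.\ a product of distinct irreducible factors over $\F_q$. The part $\n_0$ should collect exactly the prime-power factors whose radical divides $\rad\an{f}$; concretely, with $h=\gcd(f,x^u-1)$ the distinct irreducible factors of $h$ are precisely the primes dividing $f$, so raising to the $p^t$ power gives $\n_0=\an{h^{p^t}}$, while $\n_1=\an{S_f^{p^t}}$ where $S_f=(x^u-1)/h$ is coprime to $f$. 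I would verify that $\an{f}\subseteq\rad\an{h^{p^t}}$ and $\gcd(f,S_f^{p^t})=1$, which follows from the definition of $h$ as the gcd and the squarefreeness of $x^u-1$.

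Next I would translate each factor of the formula. The tree $T_{\n_0(a)}$ becomes $T_{h^{p^t}(f)}$ by definition of the $\nu$-series, once $\n_0=\an{h^{p^t}}$ is identified; here the convention that for a principal ideal $\an{b}(a):=\n_0(a)$ lets me write $h^{p^t}(f)$ for the sequence. The index set $\m\mid\n_1$ becomes the set of monic divisors $g$ of $S_f^{p^t}$, using that in the Euclidean domain $\F_q[x]$ every ideal is principal and has a unique monic generator. Finally $\varphi_{\D}(\m)$ becomes $\Phi_q(g)$ and $\ord{a}{\m}$ becomes $\ou(f,g)$ under the identifications already spelled out in the paragraph preceding the theorem, so the abstract sum $\bigoplus_{\m\mid\n_1}\frac{\varphi_{\D}(\m)}{\ord{a}{\m}}\times\cyc(\ord{a}{\m},T_{\n_0(a)})$ is exactly Equation~\eqref{eq:linear-maps}.

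The only real obstacle is the bookkeeping in the $a$-decomposition: one must be careful that the radical condition is about the \emph{distinct primes} dividing $f$, so that the exponent $p^t$ is attached correctly to give $\n_0=\an{h^{p^t}}$ rather than $\an{h}$ or some intermediate power. Everything else is a routine dictionary between the general Dedekind-domain statement and the Euclidean domain $\F_q[x]$, so no genuinely new argument is needed beyond invoking Theorem~\ref{th:main}.
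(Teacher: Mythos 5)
Your proposal is correct and takes essentially the same route as the paper: the paper likewise conjugates $L_f$ to the multiplication-by-$f$ map $\Gamma_f$ on $\frac{\F_q[x]}{\an{x^n-1}}$ via $\Pi_{\beta}$, identifies the $f$-decomposition $\an{x^n-1}=\an{h^{p^t}}\cdot\an{S_f^{p^t}}$ from the factorization $x^n-1=(x^u-1)^{p^t}$ with $x^u-1$ squarefree, and reads off Equation~\eqref{eq:linear-maps} from Theorem~\ref{th:main} together with the dictionary $\N_{\D}(g)=q^{\deg g}$, $\varphi_{\D}=\Phi_q$, $\ord{f}{\an{g}}=\ou(f,g)$. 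The only difference is presentational: the paper carries out these identifications in the discussion preceding the theorem and concludes with ``Theorem~\ref{th:main} entails the result,'' whereas you package the same steps (including the check that $h\mid f$ gives $\an{f}\subseteq\rad\an{h^{p^t}}$ and that $\gcd(f,S_f^{p^t})=1$) as an explicit proof.
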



\section*{Acknowledgments}

The first author was supported by FAPESP under grant 2015/26420-1 and the second author was supported by FAPESP under grant 2018/03038-2.



\end{document}